\newtheorem{theorem}{Theorem}
\theoremstyle{plain}
\newtheorem{definition}{Definition}
\newtheorem{lemma}{Lemma}
\newtheorem{remark}{Remark}
\numberwithin{equation}{section}
\begin{document}
\title{$(\kappa ,\mu ,\upsilon =const.)$-CONTACT\ METRIC\ MANIFOLDS\ WITH\ $%
\xi (I_{M})=0$}
\author{I. K\"{u}peli Erken}
\address{Art and Science Faculty,Department of Mathematics, Uludag
University, 16059 Bursa, TURKEY}
\email{iremkupeli@uludag.edu.tr}
\author{C. Murathan}
\address{Art and Science Faculty,Department of Mathematics, Uludag
University, 16059 Bursa, TURKEY}
\email{cengiz@uludag.edu.tr}
\date{13 june}
\subjclass{ $[2000]$Primary 53D10, 53C15 Secondary 53C25}
\keywords{Contact metric manifold, $(\kappa ,\mu ,\upsilon )-$ contact
metric manifold, nullity distributions.}

\begin{abstract}
We give a local classification of $(\kappa ,\mu ,\upsilon =const.)$-contact
metric manifold $(M,\phi ,\xi ,\eta ,g)$ with $\kappa <1$ which satisfies
the condition " the Boeckx invariant function $I_{M}=\frac{1-\frac{\mu }{2}}{%
\sqrt{1-\kappa }}$\ \ is constant along the integral curves of the
characteristic vector field $\xi $".
\end{abstract}

\maketitle

\section{\textbf{Introduction}}

It is well known that there exist contact Riemannian manifolds $%
(M^{2n+1},\phi ,\xi ,\eta ,g)$ for which the curvature tensor $R$ in the
direction of characteristic vector field $\xi $ satisfies $R(X,Y)\xi =0,$
for all $X,Y\in \Gamma (TM)$ $.$ For example, the tangent sphere bundle of a
flat Riemannian manifold carries such a structure. In \cite{BL2} Blair
studied for the first time the class of contact metric manifolds satisfying
above condition. If one applies a ${\mathcal{D}}_{\alpha }$-homothetic
deformation on $M^{2n+1}$ with $R(X,Y)\xi =0$, one can find a new class of
contact metric manifolds satisfying 
\begin{equation}
R(X,Y)\xi =\kappa \left( \eta \left( Y\right) X-\eta \left( X\right)
Y\right) +\mu \left( \eta \left( Y\right) hX-\eta \left( X\right) hY\right) ,
\label{k,mu}
\end{equation}%
for some constants $\kappa $ and $\mu $, where $2h$ denotes the Lie
derivative of the structure tensor $\phi $ with respect to characteristic
vector field $\xi $. A contact metric manifold belonging to this class is
called $(\kappa ,\mu )$-contact metric manifold. This new class of
Riemannian manifolds was introduced in \cite{BKP} as a natural
generalization both of $R(X,Y)\xi =0$ and the Sasakian condition $R(X,Y)\xi
=\eta \left( Y\right) X-\eta \left( X\right) Y$. Nowadays contact $(\kappa
,\mu )$-manifolds are considered a very important topic in contact
Riemannian geometry. In fact in despite of the technical appearance of the
definition, there are good reasons for studying $(\kappa ,\mu )$-spaces. The
first is that, in the non-Sasakian case (that is for $\kappa \neq 1$), the
condition (\ref{k,mu}) determines the curvature tensor field completely;
next, $(\kappa ,\mu )$-spaces provide non-trivial examples of some
remarkable classes of contact Riemannian manifolds, like \emph{CR-integrable}
contact metric manifolds (\cite{Tan2}), $H$-contact manifolds (\cite{PE}), 
\emph{harmonic} contact metric manifolds (\cite{vergara1}), or contact
Riemannian manifolds with $\eta $-parallel tensor (\cite{BCH}); moreover, a
local classification is known (\cite{BO}) and while the values of $\kappa $
and $%
\mu
$ change, the form of (\ref{k,mu}) is invariant under ${\mathcal{D}}_{\alpha
}$-homothetic deformations \cite{BKP}. Finally, there are also non-trivial
examples of $(\kappa ,\mu )$-contact metric manifolds, the most important
being the unit tangent sphere bundle of a Riemannian manifold of constant
sectional curvature with the usual contact metric structure.

In \cite{BO} Boeckx provided a local classification of non-Sasakian $(\kappa
,\mu )$-contact metric manifold \ respect to the number 
\begin{equation}
I_{M}=\frac{1-\frac{\mu }{2}}{\sqrt{1-\kappa }},  \label{Boeckx invariant}
\end{equation}%
which is an invariant of a $(\kappa ,\mu )$-contact metric manifold up to ${%
\mathcal{D}}_{\alpha }$-homothetic deformations.

Koufogiorgos and Tsichlias \cite{KO} proved the existence of a new class $3$%
-dimensional contact metric manifolds which are called generalized $(\kappa
,\mu )$-contact metric manifolds. Such a manifold satisfies the (\ref{k,mu})
and $\kappa ,%
\mu
$ \ are non constant smooth functions on $M.$ Moreover, it is showed in \cite%
{KO} that if $n>1$, then $\kappa $ and $\mu $ are necessarily constant.

In \cite{KMP} the condition (\ref{k,mu} ) is generalized as 
\begin{eqnarray}
R(X,Y)\xi &=&\kappa \left( \eta \left( Y\right) X-\eta \left( X\right)
Y\right) +\mu \left( \eta \left( Y\right) hX-\eta \left( X\right) hY\right)
\label{k,mu,vu} \\
&&+\upsilon \left( \eta \left( Y\right) \phi hX-\eta \left( X\right) \phi
hY\right) ,  \notag
\end{eqnarray}%
where $\kappa ,%
\mu
$ and $\upsilon $ \ are non constant smooth functions on $M.$ If the
curvature tensor field of the Levi-Civita connection on $M$ satisfies (\ref%
{k,mu,vu}), we say $(M^{2n+1},\phi ,\xi ,\eta ,g)$ is \ a $(\kappa ,%
\mu
,\upsilon )$-contact metric manifolds. Also, it is proved that, for
dimensions greater than three, such manifolds are reduced to $(\kappa ,\mu )$%
-contact metric manifolds whereas, in three dimensions, $(\kappa ,%
\mu
,\upsilon )$ -contact metric manifolds.

Koufogiorgos and Tsichlias \cite{KO2} gave a local classification of a
non-Sasakian generalized $(\kappa ,%
\mu
)$-contact metric manifold which satisfies the condition " the function $\mu 
$ is constant along the integral \ curves of the characteristic vector field 
$\xi ,$ i.e. $\xi (\mu )=0$". One can easily prove that this condition is
equivalent to $\xi (I_{M})=0$ for a non-Sasakian generalized $(\kappa ,%
\mu
)$-contact metric manifold. This has been our motivation for studying
non-Sasakian $(\kappa ,%
\mu
,\upsilon )$ -contact metric manifolds with $\xi (I_{M})=0$. We can prove
that \ $\xi (I_{M})=0$ satisfies the condition $\xi (\mu )=\upsilon (\mu -2)$%
. Moreover, the converse is also true.

The paper is organized as follows. Section 2 contains some necessary
background on contact metric manifolds. In Section 3, we give some result
concerning $(\kappa ,\mu ,\upsilon )$ -contact metric manifolds. In the last
section, we locally classify $(\kappa ,%
\mu
,\upsilon =const.)$-contact metric manifold with $\xi (I_{M})=0.$ All
manifolds are assumed to be connected.

\section{Preliminaries}

A differentiable manifold $M$ of dimension $2n+1$ is said to be a contact
manifold if it carries a global 1-form $\eta $ such that $\eta \wedge (d\eta
)^{n}\neq 0$. It is well known that then there exists a unique vector field $%
\xi $ (called the Reeb vector field) such that $\eta (\xi )=1$ and $d\eta
(\xi ,\cdot )=0$. Any contact manifold $(M,\eta )$ admits a Riemannian
metric $g$ and a $(1,1)$-tensor field $\phi $ such that 
\begin{gather}
\phi ^{2}=-I+\eta \otimes \xi ,\ \phi \xi =0,\ \eta (X)=g(X,\xi )  \label{B1}
\\
g(\phi X,\phi Y)=g(X,Y)-\eta (X)\eta (Y),\ g(X,\phi Y)=d\eta (X,Y),
\label{B0}
\end{gather}%
for any vector field $X$ and $Y$ on $M$. Define an operator $h$ by $h=\frac{1%
}{2}{\mathcal{L}}_{\xi }\phi $, where $\mathcal{L}$ denotes Lie
differentiation. The tensor field $h$ vanishes identically if and only if
the vector field $\xi $ is Killing and in this case the contact metric
manifold is said to be K-contact. It is well known that $h$ and $\phi h$ are
symmetric operators, $h$ anti-commutes with $\phi $ 
\begin{equation}
\phi h+h\phi =0,\text{ }h\xi =0,\text{ }\eta \circ h=0,\text{ }trh=\text{ }%
tr\phi h=0,  \label{B1a}
\end{equation}%
where $trh$ denotes the trace of $h$. Since $h$ anti-commutes with $\phi $,
if $X$ is an eigenvector of $h$ corresponding to the eigenvalue $\lambda $
then $\phi X$ is also an eigenvector of $h$ corresponding to the eigenvalue $%
-\lambda $ \cite{Tan}. Moreover, for any contact manifold $M$, the following
is satisfied%
\begin{equation}
\nabla _{X}\xi =-\phi X-\phi hX  \label{B2}
\end{equation}%
where $\nabla $ is the Riemannian connection of $g$. If a contact metric
manifold $M$ is normal (i.e., $N_{\phi }+2d\eta \otimes \xi =0,$ where $%
N_{\phi }$ denotes the Nijenhuis tensor formed with $\phi $), then $M$ is
called a Sasakian manifold. Equivalently, a contact metric manifold is
Sasakian if and only if $R(X,Y)\xi =\eta (Y)X-\eta (X)Y$. Moreover, any
Sasakian manifold is $K$-contact and in 3-dimension the converse also holds%
\textit{\ } \cite{B1}.

As a generalization of both $R(X,Y)\xi =0$ and the Sasakian case consider

\begin{equation}
R(X,Y)\xi =\kappa (\eta (Y)X-\eta (X)Y)+\mu (\eta (Y)hX-\eta (X)hY)
\label{k,mu,0}
\end{equation}%
for constants $\kappa $ and $\mu $. This condition is called $(\kappa ,\mu )$%
- nullity condition. This kind of manifold is called $(\kappa ,\mu )$%
-contact metric manifold\textit{\ }which was introduced and deeply studied
by Blair, Koufogiorgos and Papantoniou in \cite{BKP}.

The standard contact metric structure on the tangent sphere bundle $T_{1}M$
satisfies the $(\kappa,\mu)$-nullity condition if and only if the base
manifold $M$ is of constant curvature. In particular if $M$ has constant
curvature $c$, then $\kappa=c(2-c)$ and $\mu=-2c$.

Given a non-Sasakian $(\kappa ,\mu )$-contact metric manifold\textit{\ } $M$%
, Boeckx \cite{BO} introduced an invariant $I_{M}:=\frac{1-\frac{\mu }{2}}{%
\sqrt{1-\kappa }}$, and proved that two non-Sasakian $(\kappa ,\mu )$-
contact metric manifolds $(M_{1},\phi _{1},\xi _{1},\eta _{1},g_{1})$ and $%
(M_{2},\phi _{2},\xi _{2},\eta _{2},g_{2})$ are locally isometric as contact
metric manifolds if and only if $I_{M_{1}}=I_{M_{2}}$. Then the invariant $%
I_{M}$ was used by Boeckx for providing a full classification of $(\kappa
,\mu )$- contact metric manifolds.

By a generalized $(\kappa ,\mu )$-contact metric manifold\textit{\ }we mean
a $3$-dimensional contact metric manifold such that it satisfies (\ref%
{k,mu,0}), where $\kappa ,\mu $ are smooth non-constant functions on $M.$ A
manifold of this class was studied by Koufogiorgos and Tsichlias in \cite{KO}%
, \cite{KO1} and \cite{KO2}. A recent generalization of the $(\kappa ,\mu )$%
-contact metric manifold is given following definition.

\begin{definition}[\protect\cite{KMP}]
A $(\kappa ,\mu ,\upsilon )$-contact metric manifold is a contact metric
manifold $(M^{2n+1},\phi ,\xi ,\eta ,g)$ on which the Riemannian curvature
tensor satisfies for every $X,Y\in \Gamma (TM)$ the condition 
\begin{eqnarray}
R(X,Y)\xi &=&\kappa \left( \eta \left( Y\right) X-\eta \left( X\right)
Y\right) +\mu \left( \eta \left( Y\right) hX-\eta \left( X\right) hY\right)
\label{k,mu,2} \\
&&+\upsilon \left( \eta \left( Y\right) \phi hX-\eta \left( X\right) \phi
hY\right) ,  \notag
\end{eqnarray}%
where $\kappa ,\mu ,\upsilon $ are smooth functions on $M$.
\end{definition}

A contact metric manifold whose characteristic vector field $\xi $ is a
harmonic vector field is called an $H$-contact manifold. Moreover, in \cite%
{PE} \ Perrone proved \ that $\xi $ is a harmonic vector field if and only
if $\xi $ is an eigenvector of the Ricci operator. In \cite{KMP}
Koufogiorgos, Markellos and Papantoniou characterized the 3-dimensional $H$%
-contact metric manifolds in $(\kappa ,\mu ,\upsilon )$-contact metric
manifolds. In particular, they proved following Theorem.

\begin{theorem}[\protect\cite{KMP}]
Let $(M^{2n+1},\phi ,\xi ,\eta ,g)$ be a $3$-dimensional contact metric
manifold. If $M$ is a $(\kappa ,\mu ,\upsilon )$-contact metric manifold,
then $M$ is an $H$-contact metric manifold. Conversely, if $M$ is a $3$%
-dimensional $H$-contact metric manifold, then $M$ is a $(\kappa ,\mu
,\upsilon )$-contact metric manifold on an everywhere open and dense subset
of $M$.
\end{theorem}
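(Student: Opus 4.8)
The plan is to prove the two implications separately. The first one is valid in every odd dimension and follows from a single contraction, whereas the second essentially uses $\dim M=3$ and is responsible for the clause ``on an everywhere open and dense subset''. For the first implication, assume $M$ is a $(\kappa ,\mu ,\upsilon )$-contact metric manifold and contract (\ref{k,mu,2}) over a local orthonormal frame to obtain the Ricci tensor in the direction of $\xi $. By (\ref{B1a}) we have $\eta \circ h=0$, $\mathrm{tr}\,h=\mathrm{tr}\,\phi h=0$, and $\phi \xi =0$; hence all terms carrying $\mu $ or $\upsilon $ contribute nothing and one is left with $Q\xi =2n\kappa \,\xi $, where $Q$ is the Ricci operator. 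Therefore $\xi $ is an eigenvector of $Q$, and by Perrone's characterization recalled above $\xi $ is harmonic, i.e. $M$ is an $H$-contact manifold.

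For the converse, let $M$ be a $3$-dimensional $H$-contact manifold. By Perrone's result $Q\xi =\rho \,\xi $ with $\rho =S(\xi ,\xi )$ smooth, and since $Q$ is self-adjoint the contact subbundle $\ker \eta $ is $Q$-invariant. In dimension $3$ the curvature tensor is completely determined by $Q$ and the scalar curvature $r$ through the well-known formula $R(X,Y)Z=g(Y,Z)QX-g(X,Z)QY+S(Y,Z)X-S(X,Z)Y-\tfrac{r}{2}\bigl(g(Y,Z)X-g(X,Z)Y\bigr)$. Putting $Z=\xi $ and using $S(\cdot ,\xi )=\rho \,\eta $, one gets $R(X,Y)\xi =0$ for all $X,Y\in \ker \eta $, while $\ell X:=R(X,\xi )\xi =\bigl(\rho -\tfrac{r}{2}\bigr)X+QX$ for $X\in \ker \eta $. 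Decomposing an arbitrary vector field into its component along $\xi $ and its component in $\ker \eta $, these two facts combine into $R(X,Y)\xi =\eta (Y)\,\ell \bar{X}-\eta (X)\,\ell \bar{Y}$ with $\bar{X}=X-\eta (X)\xi $, where $\ell $ restricts to a self-adjoint endomorphism of the rank-$2$ bundle $\ker \eta $.

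It remains to write this $\ell |_{\ker \eta }$ in the required form. Split $M$ into the open set $U=\{p:h_{p}\neq 0\}$ and the open set $W=\mathrm{int}\{p:h_{p}=0\}$. On $U$ the nonnegative function $\lambda $ determined by $h^{2}=\lambda ^{2}(I-\eta \otimes \xi )$ is smooth and nowhere zero, and the operators $I$, $\tfrac{1}{\lambda }h$, $\tfrac{1}{\lambda }\phi h$ restrict to $\ker \eta $ as three pointwise linearly independent self-adjoint operators (this is clear in a local $h$-eigenframe $\{e,\phi e\}$), hence form a smooth frame of the rank-$3$ bundle of self-adjoint endomorphisms of $\ker \eta $. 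Expanding $\ell |_{\ker \eta }$ in this frame yields smooth functions $\kappa ,\mu ,\upsilon $ on $U$ with $\ell |_{\ker \eta }=\kappa I+\mu h+\upsilon \,\phi h$, and substituting into $R(X,Y)\xi =\eta (Y)\,\ell \bar{X}-\eta (X)\,\ell \bar{Y}$, using $h\xi =\phi h\xi =0$, gives exactly (\ref{k,mu,2}) on $U$. On $W$ the manifold is $K$-contact, hence Sasakian because $\dim M=3$, so $R(X,Y)\xi =\eta (Y)X-\eta (X)Y$, i.e. (\ref{k,mu,2}) with $\kappa =1$ and $\mu =\upsilon =0$. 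Since $U\cup W$ is open and its complement equals the boundary of the closed set $\{h=0\}$, which has empty interior, $U\cup W$ is open and dense, completing the argument.

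The only delicate point I anticipate is this last bookkeeping around the zero locus of $h$: on $U$ the conclusion is automatic once $\xi $ is an eigenvector of $Q$, but the functions $\kappa ,\mu ,\upsilon $ are produced by dividing by $\lambda $, so they need not extend across $\partial \{h=0\}$; this is precisely why one cannot in general replace ``open and dense'' by ``all of $M$'' and why the Sasakian region has to be treated on its own.
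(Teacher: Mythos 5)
Your proof is correct. Note that the paper itself offers no proof of this statement --- it is quoted verbatim from \cite{KMP} as background --- so there is nothing internal to compare against; your argument (contracting (\ref{k,mu,2}) with (\ref{B1a}) to get $Q\xi =2n\kappa \xi $ and invoking Perrone's characterization for one direction; for the other, using the three-dimensional curvature formula to reduce everything to the self-adjoint operator $\ell |_{\ker \eta }$, expanding it in the frame $I$, $h/\lambda $, $\phi h/\lambda $ on $\{h\neq 0\}$, and handling $\mathrm{int}\{h=0\}$ via the $K$-contact-implies-Sasakian fact) is exactly the standard proof given in the cited source, including the correct explanation of why the conclusion only holds on the open dense set $\{h\neq 0\}\cup \mathrm{int}\{h=0\}$.
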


It is proved that for a $(\kappa ,\mu ,\upsilon )$-contact metric manifold $%
M $ of dimension greater than $3$, the functions $\kappa ,\mu $ are
constants and $\upsilon $ is the zero function \cite{KMP}.

Given a contact metric structure $(M^{2n+1},\phi ,\xi ,\eta ,g)$, consider
the deformed structure 
\begin{equation}
\bar{\eta}=\alpha \eta ,\text{ \ \ }\bar{\xi}=\frac{1}{\alpha }\xi ,\text{ \
\ }\bar{\phi}=\phi ,\text{ \ \ }\bar{g}=\alpha g+\alpha (\alpha -1)\eta
\otimes \eta ,  \label{DHOM}
\end{equation}%
where $\alpha $ is a positive constant. This deformation is called ${%
\mathcal{D}}_{\alpha }$-homothetic deformation \cite{Tan2}. It is well known
that $(M^{2n+1},\bar{\phi},\bar{\xi},\bar{\eta},\bar{g})$ is also a contact
metric manifold. By the direct computations we easily see that the tensor $h$
and the curvature tensor transform in the following manner \cite{BKP}; 
\begin{equation}
\bar{h}=\frac{1}{\alpha }h  \label{h}
\end{equation}%
and%
\begin{eqnarray}
\alpha \bar{R}(X,Y)\bar{\xi} &=&R(X,Y)\xi +(\alpha -1)^{2}(\eta (Y)X-\eta
(X)Y)  \notag \\
&&-(\alpha -1)((\nabla _{X}\phi )Y-(\nabla _{Y}\phi )X+\eta (X)(Y+hY)-\eta
(Y)(X+hX)),  \label{RDEHOM}
\end{eqnarray}%
for any $X,Y\in \Gamma (TM)$. Moreover, it is well known (\cite{BKP} or \cite%
{Tan2}) that every $3$-dimensional contact metric manifold satisfies 
\begin{equation}
(\nabla _{X}\phi Y)=g(X+hX,Y)\xi -\eta \left( Y\right) (X+hX).  \label{CR}
\end{equation}%
Using (\ref{RDEHOM}) and (\ref{CR}), we obtain that%
\begin{eqnarray}
\bar{R}(X,Y)\bar{\xi} &=&\frac{\kappa +\alpha ^{2}-1}{\alpha ^{2}}(\bar{\eta}%
(Y)X-\bar{\eta}(X)Y)+\frac{\mu +2(\alpha -1)}{\alpha }(\bar{\eta}(Y)\bar{h}X-%
\bar{\eta}(X)\bar{h}Y)  \notag \\
&&+\frac{\upsilon }{\alpha }(\bar{\eta}(Y)\bar{\phi}\bar{h}X-\bar{\eta}(X)%
\bar{\phi}\bar{h}Y)  \label{RKDEM}
\end{eqnarray}%
for any $X,Y\in \Gamma (TM)$.Thus $(M^{2n+1},\bar{\phi},\bar{\xi},\bar{\eta},%
\bar{g})$ is a $(\bar{\kappa},\bar{\mu},\bar{\upsilon})$-contact metric
manifold with 
\begin{equation}
\bar{\kappa}=\frac{\kappa +\alpha ^{2}-1}{\alpha ^{2}},\text{ \ \ }\bar{\mu}=%
\frac{\mu +2(\alpha -1)}{\alpha },\text{ \ \ }\bar{\upsilon}=\frac{\upsilon 
}{\alpha }\text{.}  \label{(k,mu,vu)DHOM}
\end{equation}

\section{$(\protect\kappa ,\protect\mu ,\protect\upsilon )$-contact metric
manifolds}

In this section, we will give some basic results of $(\kappa ,\mu ,\upsilon
) $-contact metric manifolds.

\begin{lemma}[ \protect\cite{KMP}]
The following relations are satisfied on any $(\kappa ,\mu ,\upsilon )$%
-contact metric manifold $(M^{3},\phi ,\xi ,\eta ,g)$.

\begin{equation}
h^{2}=(\kappa -1)\phi ^{2}\text{, \ \ }\kappa =\frac{Trl}{2}\leq 1,
\label{hsq}
\end{equation}%
\begin{equation}
\xi (\kappa )=2\upsilon (\kappa -1)\text{, }  \label{zetak}
\end{equation}%
\begin{equation}
Q\xi =2\kappa \xi ,  \label{Ricci}
\end{equation}%
\begin{equation}
Q=(\frac{\tau }{2}-\kappa )I+(-\frac{\tau }{2}+3\kappa )\eta \otimes \xi
+\mu h+\upsilon \phi h,\text{ \ \ }\kappa <1  \label{Ricci Q}
\end{equation}%
where $Q$ is the Ricci operator of $M$, $\tau $ denotes scalar curvature of $%
M$ and $l=R(.,\xi )\xi $.
\end{lemma}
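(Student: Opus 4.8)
The plan is to read the operator $l=R(\cdot ,\xi )\xi $ off the defining identity (\ref{k,mu,2}) and then feed it into the standard contact‑metric identities. Putting $Y=\xi $ in (\ref{k,mu,2}) and using $h\xi =\phi h\xi =0$ gives
\begin{equation*}
lX=R(X,\xi )\xi =\kappa \bigl(X-\eta (X)\xi \bigr)+\mu hX+\upsilon \phi hX=-\kappa \phi ^{2}X+\mu hX+\upsilon \phi hX .
\end{equation*}
Substituting this into the standard identity $l-\phi l\phi =-2(\phi ^{2}+h^{2})$, valid on every contact metric manifold (see \cite{B1}), and simplifying with $\phi h=-h\phi $, $\phi ^{2}h=-h$, $\phi ^{3}=-\phi $, the $\mu h$ and $\upsilon \phi h$ contributions cancel and one is left with $-2\kappa \phi ^{2}=-2\phi ^{2}-2h^{2}$, i.e. $h^{2}=(\kappa -1)\phi ^{2}$. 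Restricting this to $\ker \eta $, where $\phi ^{2}=-I$, gives $h^{2}=(1-\kappa )I$ there; since $h$ is symmetric, $h^{2}$ is positive semidefinite, whence $\kappa \le 1$. Finally, taking the trace of the displayed expression for $l$ in dimension $3$, where $\mathrm{tr}\,h=\mathrm{tr}\,\phi h=0$ and $\mathrm{tr}(I-\eta \otimes \xi )=2$, yields $\mathrm{tr}\,l=2\kappa $, i.e. $\kappa =\tfrac{1}{2}\mathrm{tr}\,l$. This settles (\ref{hsq}).

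For (\ref{Ricci}) I would contract (\ref{k,mu,2}): for an orthonormal frame $\{e_{i}\}$,
\begin{equation*}
\mathrm{Ric}(X,\xi )=\sum_{i}g\bigl(R(e_{i},X)\xi ,e_{i}\bigr),
\end{equation*}
and every $h$‑ and $\phi h$‑term drops out because $\mathrm{tr}\,h=\mathrm{tr}\,\phi h=0$ and $\eta \circ h=0$, while the $\kappa $‑term contributes $2\kappa \,\eta (X)$ in dimension $3$. Hence $\mathrm{Ric}(X,\xi )=2\kappa \,\eta (X)$, i.e. $Q\xi =2\kappa \xi $. For (\ref{Ricci Q}) I would invoke the fact that in dimension $3$ the curvature tensor is determined by the Ricci tensor,
\begin{equation*}
R(X,Y)Z=g(Y,Z)QX-g(X,Z)QY+\mathrm{Ric}(Y,Z)X-\mathrm{Ric}(X,Z)Y-\tfrac{\tau }{2}\bigl(g(Y,Z)X-g(X,Z)Y\bigr).
\end{equation*}
Setting $Z=\xi $, substituting $\mathrm{Ric}(\cdot ,\xi )=2\kappa \,\eta $ and $Q\xi =2\kappa \xi $, equating with the right‑hand side of (\ref{k,mu,2}), and then specializing to $Y=\xi $ and solving for $QX$ (again using $h\xi =\phi h\xi =0$) produces exactly $Q=(\tfrac{\tau }{2}-\kappa )I+(-\tfrac{\tau }{2}+3\kappa )\eta \otimes \xi +\mu h+\upsilon \phi h$. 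This is the non‑Sasakian regime $\kappa <1$; when $\kappa =1$, (\ref{hsq}) forces $h=0$ and $M$ is Sasakian, so the formula persists trivially.

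For (\ref{zetak}) I would differentiate the relation $h^{2}=(\kappa -1)\phi ^{2}$ covariantly in the direction $\xi $. From (\ref{CR}) with $X=\xi $ one obtains $\nabla _{\xi }\phi =0$, hence $\nabla _{\xi }\phi ^{2}=0$, so
\begin{equation*}
(\nabla _{\xi }h)\circ h+h\circ (\nabla _{\xi }h)=\xi (\kappa )\,\phi ^{2}.
\end{equation*}
Inserting the standard formula for $\nabla _{\xi }h$ valid on any contact metric manifold (see \cite{B1}) together with the expression for $l$ above, a short computation gives $\nabla _{\xi }h=-\mu \phi h+\upsilon h$. Substituting this into the previous display, the $\mu $‑terms cancel (since $h\phi h=-\phi h^{2}$) and the left‑hand side reduces to $2\upsilon h^{2}=2\upsilon (\kappa -1)\phi ^{2}$; as $\phi ^{2}$ is not the zero operator, $\xi (\kappa )=2\upsilon (\kappa -1)$.

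The trace computations and the comparison leading to (\ref{Ricci Q}) are routine. The only step requiring genuine input beyond bookkeeping is pinning down the correct form of the two standard contact‑metric identities used above — the one expressing $l-\phi l\phi $ through $\phi ^{2}$ and $h^{2}$, and the one for $\nabla _{\xi }h$, both found in \cite{B1}; with those in hand, together with $\phi h+h\phi =0$ and $\phi ^{2}=-I+\eta \otimes \xi $, the remaining manipulations are short and direct.
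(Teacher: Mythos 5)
The paper does not actually prove this lemma: it is quoted verbatim from \cite{KMP}, so there is no internal argument to compare yours against. Your reconstruction is correct and is essentially the standard derivation. Reading $lX=-\kappa \phi ^{2}X+\mu hX+\upsilon \phi hX$ off (\ref{k,mu,2}) and feeding it into Blair's identities $l-\phi l\phi =-2(\phi ^{2}+h^{2})$ and $\nabla _{\xi }h=\phi -\phi l-\phi h^{2}$ is exactly how these formulas are obtained in \cite{KMP} (and, for $\upsilon =0$, in \cite{BKP}). I checked the bookkeeping: $\phi l\phi =\kappa \phi ^{2}+\mu h+\upsilon \phi h$ (using $\phi ^{2}h=-h$ and $h\phi =-\phi h$), so the $\mu h$ and $\upsilon \phi h$ terms do cancel in $l-\phi l\phi $; likewise $\nabla _{\xi }h=-\mu \phi h+\upsilon h$, and in $(\nabla _{\xi }h)h+h(\nabla _{\xi }h)$ the $\mu $-terms cancel because $h\phi h=-\phi h^{2}$, leaving $2\upsilon h^{2}=\xi (\kappa )\phi ^{2}$ as claimed. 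The trace computations for $\kappa =\mathrm{Tr}\,l/2$ and for $Q\xi =2\kappa \xi $, and the derivation of (\ref{Ricci Q}) from the three-dimensional curvature decomposition with $\mathrm{Ric}(\cdot ,\xi )=2\kappa \eta $, all reproduce the stated coefficients $(\tfrac{\tau }{2}-\kappa )$ and $(-\tfrac{\tau }{2}+3\kappa )$ exactly. The only caveat worth recording is that when $\kappa =1$ one has $h=0$ and the functions $\mu ,\upsilon $ are not determined by (\ref{k,mu,2}), which is precisely why the restriction $\kappa <1$ is attached to (\ref{Ricci Q}); your remark that the formula ``persists trivially'' in that case should be read with this non-uniqueness in mind.
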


\begin{lemma}
Let $(M,\phi ,\xi ,\eta ,g)$ be a $(\kappa ,\mu ,\upsilon )$-contact metric
manifold. Then, for any point $P\in M$, with $\kappa (P)<1$ there exist a
neighbourhood $U$ of $P$ and an $h$-frame on $U$, i.e. orthonormal vector
fields $\xi ,$ $X$, $\phi X$, defined on $U$, such that 
\begin{equation}
hX=\lambda X,\text{ \ \ }h\phi X=-\lambda \phi X\text{, \ \ }h\xi =0\text{,
\ \ }\lambda =\sqrt{1-\kappa }  \label{hfrme}
\end{equation}%
at any point $q\in U$. Moreover, setting $A=X\lambda ,B=\phi X\lambda $ and $%
C=X\upsilon $, $D=\phi X\upsilon $ on $U$ the following formulas are true :%
\begin{equation}
\nabla _{X}\xi =-(\lambda +1)\phi X,\text{ \ \ }\nabla _{\phi X}\xi
=(1-\lambda )X,  \label{eq1}
\end{equation}%
\begin{equation}
\nabla _{\xi }X=-\frac{\mu }{2}\phi X,\text{ \ \ }\nabla _{\xi }\phi X=\frac{%
\mu }{2}X,  \label{eq2}
\end{equation}%
\begin{equation}
\nabla _{X}X=\frac{B}{2\lambda }\phi X,\text{ \ \ }\nabla _{\phi X}\phi X=%
\frac{A}{2\lambda }X,  \label{eq3}
\end{equation}%
\begin{equation}
\nabla _{\phi X}X=-\frac{A}{2\lambda }\phi X+(\lambda -1)\xi ,\text{ \ \ }%
\nabla _{X}\phi X=-\frac{B}{2\lambda }X+(\lambda +1)\xi ,  \label{eq4}
\end{equation}%
\begin{equation}
\lbrack \xi ,X]=(1+\lambda -\frac{\mu }{2})\phi X,\text{ \ \ }[\xi ,\phi
X]=(\lambda -1+\frac{\mu }{2})X,  \label{eq5}
\end{equation}%
\begin{equation}
\lbrack X,\phi X]=-\frac{B}{2\lambda }X+\frac{A}{2\lambda }\phi X+2\xi ,
\label{eq6}
\end{equation}%
\begin{equation}
h\text{ }grad\mu +\phi h\text{ }grad\upsilon =grad\kappa -\xi (\kappa )\xi ,
\label{eq7}
\end{equation}%
\begin{equation}
X\mu =-2A-D,  \label{eq8}
\end{equation}%
\begin{equation}
\phi X\mu =2B+C,  \label{eq9}
\end{equation}%
\begin{equation}
\xi (A)=(1+\lambda -\frac{\mu }{2})B+\upsilon A+\lambda C,  \label{eq10}
\end{equation}%
\begin{equation}
\xi (B)=(\lambda -1+\frac{\mu }{2})A+\upsilon B+\lambda D,  \label{eq11}
\end{equation}
\end{lemma}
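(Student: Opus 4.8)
The plan is to establish the $h$-frame and all the listed formulas by systematically exploiting the structure equations of a contact metric manifold together with the $(\kappa,\mu,\upsilon)$-nullity condition. First I would construct the $h$-frame: since $\kappa(P)<1$, equation (\ref{hsq}) gives $h^2=(\kappa-1)\phi^2=(1-\kappa)(I-\eta\otimes\xi)$ on the contact subbundle, so $h$ has eigenvalues $\pm\lambda$ with $\lambda=\sqrt{1-\kappa}>0$ in a neighbourhood $U$ of $P$ (by continuity $\kappa<1$ persists near $P$). Picking a unit eigenvector $X$ for $+\lambda$, the anti-commutation $\phi h+h\phi=0$ from (\ref{B1a}) forces $h\phi X=-\lambda\phi X$, and $\phi X$ is automatically a unit vector orthogonal to $X$ and $\xi$ by (\ref{B0})--(\ref{B1}); this yields (\ref{hfrme}). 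The formula (\ref{eq1}) is then immediate from (\ref{B2}): $\nabla_X\xi=-\phi X-\phi hX=-\phi X-\lambda\phi X=-(\lambda+1)\phi X$, and similarly for $\nabla_{\phi X}\xi$ using $h\phi X=-\lambda\phi X$ and $\phi^2 X=-X$.

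Next I would derive the remaining covariant derivatives. Since $\xi,X,\phi X$ is orthonormal, each $\nabla_U V$ is determined by its three components, two of which come from metric compatibility (e.g. $g(\nabla_U X,X)=0$, $g(\nabla_U X,\xi)=-g(X,\nabla_U\xi)$ via (\ref{eq1})) and the third from a Koszul-type computation. For (\ref{eq2}), I would use that $\xi$ acts on the eigenspaces of $h$: differentiating $hX=\lambda X$ along $\xi$ and using the known evolution of $h$ along $\xi$ on a contact metric manifold (together with $\xi(\lambda)=\upsilon\lambda$, which follows from (\ref{zetak}) since $\xi(\kappa)=2\upsilon(\kappa-1)$ gives $\xi(\lambda^2)=-2\upsilon\lambda^2$); the component of $\nabla_\xi X$ along $\phi X$ turns out to be $-\mu/2$, with the coefficient $\mu$ entering precisely through the curvature identity. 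For (\ref{eq3})--(\ref{eq4}), the key is that $X(g(X,X))=0$ etc. pin down two components, and the $\xi$-components of $\nabla_{\phi X}X$ and $\nabla_X\phi X$ come from (\ref{eq1}) via $g(\nabla_{\phi X}X,\xi)=-g(X,\nabla_{\phi X}\xi)=-(1-\lambda)$ wait — more carefully $g(\nabla_{\phi X}X,\xi)=\phi X\,g(X,\xi)-g(X,\nabla_{\phi X}\xi)=-g(X,(1-\lambda)X)=\lambda-1$, matching (\ref{eq4}); the $\phi X$-component $-A/(2\lambda)$ is obtained by computing $g(\nabla_{\phi X}X,\phi X)=-g(X,\nabla_{\phi X}\phi X)$ and using the symmetry of the Levi-Civita connection applied to $\phi X(g(X,X))$ — in fact one gets a coupled pair whose solution involves $A=X\lambda$ and $B=\phi X\lambda$. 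The bracket formulas (\ref{eq5})--(\ref{eq6}) then follow by antisymmetrizing (\ref{eq1})--(\ref{eq4}).

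For the remaining identities (\ref{eq7})--(\ref{eq11}) I would differentiate the nullity relation (\ref{k,mu,2}) or, more efficiently, apply the second Bianchi identity / the known formula for $(\nabla_X h)$ on such manifolds. Equation (\ref{eq7}) is the gradient form of a consistency condition: applying $h$ to $\mathrm{grad}\,\mu$ and $\phi h$ to $\mathrm{grad}\,\upsilon$ and using that on a $(\kappa,\mu,\upsilon)$-space $\mathrm{grad}\,\kappa$ decomposes along the $h$-eigenframe; one extracts it by feeding the frame vectors into a curvature identity coming from $R(X,\phi X)\xi$ differentiated, or directly from the structure of the Ricci operator (\ref{Ricci Q}). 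Expanding (\ref{eq7}) in the $h$-frame, using $h X=\lambda X$, $\phi h X=-\lambda\phi X$ and $\lambda^2=1-\kappa$, splits into the scalar equations (\ref{eq8}) ($X\mu=-2A-D$) and (\ref{eq9}) ($\phi X\mu=2B+C$) once one notes $X\kappa=-2\lambda A$, $\phi X\kappa=-2\lambda B$ and computes the $\phi h$-term using $C=X\upsilon$, $D=\phi X\upsilon$. Finally (\ref{eq10})--(\ref{eq11}) come from applying $\xi$ to the definitions $A=X\lambda$, $B=\phi X\lambda$: write $\xi(X\lambda)=[\xi,X]\lambda+X(\xi\lambda)$, substitute $[\xi,X]$ from (\ref{eq5}), use $\xi\lambda=\upsilon\lambda$ so that $X(\xi\lambda)=X(\upsilon\lambda)=\lambda X\upsilon+\upsilon X\lambda=\lambda C+\upsilon A$, and collect terms — this produces (\ref{eq10}) exactly, and (\ref{eq11}) analogously from $B=\phi X\lambda$.

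The main obstacle I anticipate is pinning down the $\mu$-dependence in (\ref{eq2}) and consequently in the brackets (\ref{eq5}): unlike the metric-compatibility components, the coefficient $\mu/2$ in $\nabla_\xi X=-\tfrac{\mu}{2}\phi X$ is genuinely forced by the nullity condition, not by the contact structure alone, so it requires carefully evaluating a component of $R(X,\xi)\xi$ or of the formula relating $\nabla_\xi h$ to $\phi$, $h$, and the Ricci operator on a $(\kappa,\mu,\upsilon)$-manifold, and checking that the $\mu$ (and not $\upsilon$) is what appears. Once (\ref{eq2}) and the covariant derivatives are secured, everything else is bookkeeping in the orthonormal frame.
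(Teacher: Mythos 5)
Your proposal is correct and follows essentially the same route as the paper: (\ref{eq7}) via the contracted second Bianchi identity $\frac{1}{2}grad\,\tau =\sum (\nabla _{X_{i}}Q)X_{i}$ applied to the Ricci operator (\ref{Ricci Q}), then (\ref{eq8})--(\ref{eq9}) by expanding (\ref{eq7}) in the $h$-frame, and (\ref{eq10})--(\ref{eq11}) from $\xi X\lambda =[\xi ,X]\lambda +X\xi \lambda $ with $\xi \lambda =\upsilon \lambda $; for (\ref{eq1})--(\ref{eq6}) the paper simply cites \cite{KO} and \cite{KO1}, which you instead sketch from first principles in a standard and workable way. The only blemish is a sign slip in one parenthetical: (\ref{zetak}) gives $\xi (\lambda ^{2})=+2\upsilon \lambda ^{2}$, not $-2\upsilon \lambda ^{2}$, consistent with the relation $\xi \lambda =\upsilon \lambda $ that you correctly use everywhere else.
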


\begin{proof}
The proofs of (\ref{eq1})$-$(\ref{eq6}) are given in \cite{KO} and \cite{KO1}%
. In order to prove( \ref{eq8}), we will use well known formula%
\begin{equation*}
\frac{1}{2}grad\text{ }\tau =\sum\limits_{i=1}^{3}(\nabla _{X_{i}}Q)X_{i},
\end{equation*}%
where $\{X_{1}=\xi ,$ $X_{2}=X$, $X_{3}=\phi X\}$. Using (\ref{Ricci Q}) and
(\ref{B2}), since $trh=trh\phi =0$, we have%
\begin{eqnarray}
\sum\limits_{i=1}^{3}(\nabla _{X_{i}}Q)X_{i} &=&\sum\limits_{i=1}^{3}X_{i}(%
\frac{\tau }{2}-\kappa )+\sum\limits_{i=1}^{3}(X_{i}(\mu
)hX_{i}+X_{i}(\upsilon )\phi hX_{i})  \notag \\
&&+\mu \sum\limits_{i=1}^{3}(\nabla _{X_{i}}h)X_{i}+\upsilon
\sum\limits_{i=1}^{3}(\nabla _{X_{i}}\phi h)X_{i}+\xi (-\frac{\tau }{2}%
+3\kappa )\xi  \notag \\
&=&\frac{1}{2}grad\tau -grad\kappa +h\text{ }grad\mu +\phi h\text{ }%
grad\upsilon +\xi (-\frac{\tau }{2}+3\kappa )\xi  \label{eq14} \\
&&+\mu \sum\limits_{i=1}^{3}(\nabla _{X_{i}}h)X_{i}+\upsilon
\sum\limits_{i=1}^{3}(\nabla _{X_{i}}\phi h)X_{i}  \notag
\end{eqnarray}%
From the relations (\ref{hfrme}), (\ref{eq3}) and (\ref{eq4}), we obtain $%
\sum\limits_{i=1}^{3}(\nabla _{X_{i}}h)X_{i}=0$ and $\sum\limits_{i=1}^{3}(%
\nabla _{X_{i}}\phi h)X_{i}=2\lambda ^{2}\xi .$ Using the last relations in (%
\ref{eq14}), one has%
\begin{equation}
\frac{1}{2}grad\text{ }\tau =\frac{1}{2}grad\tau -grad\kappa +h\text{ }%
grad\mu +\phi h\text{ }grad\upsilon +\xi (-\frac{\tau }{2}+3\kappa )\xi
+2\lambda ^{2}\upsilon \xi  \label{eq15}
\end{equation}%
that is%
\begin{equation}
\xi (\kappa )\xi -grad\kappa +h\text{ }grad\mu +\phi h\text{ }grad\upsilon
+\xi (-\frac{\tau }{2}+2\kappa )\xi +2\lambda ^{2}\upsilon \xi =0.
\label{eq16}
\end{equation}%
Since the vector field $\xi (\kappa )\xi -grad\kappa +h$ $grad\mu +\phi h$ $%
grad\upsilon $ is orthogonal to $\xi .$ So, we get (\ref{eq7}). The
equations (\ref{eq8}) and (\ref{eq9}) are immediate consequences of (\ref%
{eq7}).

By virtue of (\ref{zetak}) and (\ref{eq5}), we have%
\begin{eqnarray*}
\xi (A) &=&\xi X\lambda =[\xi ,X]\lambda +X\xi \lambda =(1+\lambda -\frac{%
\mu }{2})\phi X\lambda +\lambda X\upsilon +\upsilon X\lambda \\
\text{ \ \ \ \ \ \ \ \ \ \ \ \ \ \ \ \ \ \ \ \ \ \ \ \ \ \ \ \ \ \ \ \ \ \ \ 
} &=&(1+\lambda -\frac{\mu }{2})B+C\lambda +\upsilon A.
\end{eqnarray*}%
Similarly, the equation (\ref{eq11}) is proved .
\end{proof}

\section{$(\protect\kappa ,\protect\mu ,\protect\upsilon =const.)$-contact
metric manifolds with $\protect\xi (I_{M})=0$}

Koufogiorgos and Tsichlias \cite{KO2} gave a local classification of a
non-Sasakian generalized $(\kappa ,%
\mu
)$-contact metric manifold which satisfies the condition $\xi (\mu )=0$. We
recall the (\ref{Boeckx invariant}). We can easily prove that $\xi (\mu )=0$
if and only if $\xi (I_{M})=0$. Now, we assume that $(M,\phi ,\xi ,\eta ,g)$
be a non-Sasakian $(\kappa ,\mu ,\upsilon )$-contact metric manifold. Using (%
\ref{zetak}) ,we can easily obtain that $\xi (I_{M})=0$ if and only if $\xi
(\mu )=\upsilon (\mu -2)$. This case is also our motivation. If $\upsilon =0$%
, we have classification which is given in \cite{KO2}. Because of this fact
we assume that $\upsilon $ $\neq 0$. Let us concentrate that the value $%
\upsilon $ is constant. Under this assumption, we will give a local
classification of $(\kappa ,\mu ,\upsilon =const)$- contact metric manifold
with $\kappa <1$ satisfying the condition $\xi (I_{M})=0$ in the following
Theorem.

\begin{theorem}[Main Theorem]
Let $(M,\phi ,\xi ,\eta ,g)$ be a non-Sasakian $(\kappa ,\mu ,\upsilon
=const.)$-contact metric manifold and $\xi (I_{M})=0$, where $\upsilon
=const.$ $\neq 0$. Then

$1)$ At any point of $M$, precisely one of the following relations is valid: 
$\mu =2(1+\sqrt{1-\kappa }),$ or $\mu =2(1-\sqrt{1-\kappa })$

$2)$ At any point $P\in M$ there exists a chart $(U,(x,y,z))$ with $P\in
U\subseteq M,$ such that

\ \ \ \ \ \ $i)$ the functions $\kappa ,\mu $ depend only on the variables $%
x $, $z.$

\ \ \ \ \ \ $ii)$ if $\mu =2(1+\sqrt{1-\kappa }),$ $($resp. $\mu =2(1-\sqrt{%
1-\kappa })),$ the tensor fields $\eta $, $\xi $, $\phi $, $g$, $h$ are
given by the relations,%
\begin{equation*}
\xi =\frac{\partial }{\partial x},\text{ \ \ }\eta =dx-adz
\end{equation*}%
\begin{equation*}
g=\left( 
\begin{array}{ccc}
1 & 0 & -a \\ 
0 & 1 & -b \\ 
-a & -b & 1+a^{2}+b^{2}%
\end{array}%
\right) \text{ \ \ \ \ }\left( \text{resp. \ \ }g=\left( 
\begin{array}{ccc}
1 & 0 & -a \\ 
0 & 1 & -b \\ 
-a & -b & 1+a^{2}+b^{2}%
\end{array}%
\right) \right) ,
\end{equation*}%
\begin{equation*}
\phi =\left( 
\begin{array}{ccc}
0 & a & -ab \\ 
0 & b & -1-b^{2} \\ 
0 & 1 & -b%
\end{array}%
\right) \text{ \ \ \ \ }\left( \text{resp. \ \ }\phi =\left( 
\begin{array}{ccc}
0 & -a & ab \\ 
0 & -b & 1+b^{2} \\ 
0 & -1 & b%
\end{array}%
\right) \right) ,
\end{equation*}%
\begin{equation*}
h=\left( 
\begin{array}{ccc}
0 & 0 & -a\lambda \\ 
0 & \lambda & -2\lambda b \\ 
0 & 0 & -\lambda%
\end{array}%
\right) \text{ \ \ \ \ \ }\left( \text{resp. \ \ }h=\left( 
\begin{array}{ccc}
0 & 0 & a\lambda \\ 
0 & -\lambda & 2\lambda b \\ 
0 & 0 & \lambda%
\end{array}%
\right) \right)
\end{equation*}%
with respect to the basis $\left( \frac{\partial }{\partial x},\frac{%
\partial }{\partial y},\frac{\partial }{\partial z}\right) ,$ where $%
a=2y+f(z)$ \ \ (resp.\ $a=-2y+f(z)$), $b=-\frac{y^{2}}{2}\upsilon -y\frac{%
f(z)}{2}\upsilon -\frac{y}{2}\frac{r^{^{\prime }}(z)}{r(z)}+\frac{2}{%
\upsilon }r(z)e^{\upsilon x}+s(z)$ (resp. $b=\frac{y^{2}}{2}\upsilon -y\frac{%
f(z)}{2}\upsilon -\frac{y}{2}\frac{r^{^{\prime }}(z)}{r(z)}+\frac{2}{%
\upsilon }r(z)e^{\upsilon x}+s(z)$)\ $\lambda =\lambda (x,z)=r(z)e^{\upsilon
x}$ \ and $f(z)$, $r(z)$, $s(z)$ are arbitrary smooth functions of $z.$
\end{theorem}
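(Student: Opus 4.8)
The plan is to work near a point $P$ with $\kappa (P)<1$, entirely in the local $h$-frame $\{\xi ,X,\phi X\}$ provided by Lemma~2. Since $\upsilon $ is constant we have $C=X\upsilon =0$ and $D=\phi X\upsilon =0$, so (\ref{eq8}) and (\ref{eq9}) reduce to $X\mu =-2A$ and $\phi X\mu =2B$; moreover (\ref{zetak}) together with $\lambda ^{2}=1-\kappa $ gives $\xi \lambda =\upsilon \lambda $, and (as observed in the text) the hypothesis $\xi (I_{M})=0$ is equivalent to $\xi \mu =\upsilon (\mu -2)$. Everything will follow by combining these with the commutators (\ref{eq5}), (\ref{eq6}) and with (\ref{eq10}), (\ref{eq11}).

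For part $(1)$ I would compute $\xi (X\mu )$ in two ways. On one hand $\xi (X\mu )=-2\xi (A)$, which by (\ref{eq10}) (with $C=0$) equals $-2(1+\lambda -\tfrac{\mu }{2})B-2\upsilon A$; on the other hand $\xi (X\mu )=[\xi ,X]\mu +X(\xi \mu )=(1+\lambda -\tfrac{\mu }{2})(2B)-2\upsilon A$ by (\ref{eq5}) and $\xi \mu =\upsilon (\mu -2)$. Comparing, the $A$-terms cancel and we are left with $(1+\lambda -\tfrac{\mu }{2})B=0$; the symmetric computation with $\phi X\mu $ in place of $X\mu $ yields $(\lambda -1+\tfrac{\mu }{2})A=0$. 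Now suppose that on some nonempty open set $U$ neither $\mu =2(1+\sqrt{1-\kappa})$ nor $\mu =2(1-\sqrt{1-\kappa})$ holds; then $A\equiv 0\equiv B$ on $U$, so every derivative of $A$ and $B$ vanishes on $U$. But applying $X$ and $\phi X$ to $\lambda $ and using (\ref{eq6}) gives $X(B)-\phi X(A)=2\upsilon \lambda $ (the $\xi $-term of $[X,\phi X]$ contributing $2\xi \lambda =2\upsilon \lambda $), whence $2\upsilon \lambda =0$ on $U$, contradicting $\upsilon \neq 0$ and $\lambda =\sqrt{1-\kappa}>0$. Hence at every point one of the two relations holds; they cannot hold simultaneously, for that would force $\lambda =0$, excluded. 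This proves $(1)$. I regard this dichotomy as the conceptual core of the theorem.

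For part $(2)$ I treat the case $\mu =2(1+\lambda )$ on a neighbourhood of $P$ (the parallel case $\mu =2(1-\lambda )$, which produces the formulas listed in parentheses, is obtained by interchanging the roles of $X$ and $\phi X$). Comparing $X\mu =-2A$ with $X\mu =2X\lambda =2A$ forces $A=0$, so (\ref{eq5}) and (\ref{eq6}) become $[\xi ,X]=0$, $[\xi ,\phi X]=2\lambda X$, $[X,\phi X]=-\tfrac{B}{2\lambda }X+2\xi $. The $1$-form $\gamma $ with $\gamma (\xi )=\gamma (X)=0$, $\gamma (\phi X)=1$ is closed (one checks $d\gamma $ vanishes on each of $(\xi ,X)$, $(\xi ,\phi X)$, $(X,\phi X)$ directly from these brackets), so $\gamma =dz$ for a local function $z$ with $\xi z=Xz=0$, $\phi Xz=1$. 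Since $\xi $ and $X$ commute, are independent, and both annihilate $z$, their commuting flows applied to a curve transverse to the level sets of $z$ furnish coordinates $(x,y,z)$ about $P$ with $\xi =\partial /\partial x$, $X=\partial /\partial y$ and third coordinate $z$; then $\phi X=a\,\partial /\partial x+b\,\partial /\partial y+\partial /\partial z$ with $a:=\phi X(x)$, $b:=\phi X(y)$ (the $\partial /\partial z$-coefficient being $\phi X(z)=1$). Writing $\partial /\partial z=\phi X-a\xi -bX$ and using that $\{\xi ,X,\phi X\}$ is orthonormal gives at once the stated matrix for $g$ (hence $\eta =dx-a\,dz$), and then $\phi \xi =0$, $\phi ^{2}=-I+\eta \otimes \xi $ together with $h\xi =0$, $hX=\lambda X$, $h\phi X=-\lambda \phi X$ give the stated matrices for $\phi $ and $h$.

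It remains to identify $a,b,\lambda $ explicitly, which is a routine integration. From $\xi \lambda =\upsilon \lambda $ and $X\lambda =A=0$ we get $\partial \lambda /\partial x=\upsilon \lambda $, $\partial \lambda /\partial y=0$, so $\lambda =r(z)e^{\upsilon x}$ with $r>0$. Reading the three brackets in coordinates gives $\partial a/\partial x=0$, $\partial a/\partial y=2$, hence $a=2y+f(z)$; and $\partial b/\partial x=2\lambda $, $\partial b/\partial y=-B/(2\lambda )$ with $B=\phi X(\lambda )=\upsilon a\lambda +(r^{\prime }/r)\lambda $, and integrating these two equations produces exactly the asserted formula for $b$, with $s(z)$ the constant of integration; the remaining relations $\xi (B)=\upsilon B$, $X(B)=2\upsilon \lambda $ (special cases of (\ref{eq10}), (\ref{eq11}) and the $\lambda $-analogue of (\ref{eq6})) are then automatic, and $\kappa =1-\lambda ^{2}$, $\mu =2(1+\lambda )$ visibly depend only on $x,z$. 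I expect the only delicate point to be the coordinate construction — arranging that the third coordinate is genuinely the function $z$ with $dz=\gamma $ — which works precisely because the flows of $\xi $ and $X$ preserve $z$. One may also verify conversely, by a direct computation with the resulting tensors, that every choice of smooth $f,r,s$ yields such a manifold.
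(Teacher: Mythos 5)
Your argument is correct, and for part $(2)$ it follows the paper's route almost verbatim: coordinates adapted to the commuting pair $\xi ,X$ (resp. $\xi ,\phi X$), then integration of the bracket relations to get $a$, $b$, $\lambda $; your construction of the third coordinate via the closed dual $1$-form of $\phi X$ is in fact tidier than the paper's normalization of $c(z)$ to $1$ by reparametrization, and your identification $A=0$ from $X\mu =-2A=2X\lambda $ is an equivalent shortcut to the paper's use of (\ref{zetaAB2}). Where you genuinely diverge is part $(1)$. The paper first derives the quadratic identity $A^{2}(\lambda -1+\tfrac{\mu }{2})+B^{2}(1+\lambda -\tfrac{\mu }{2})=0$ (its (\ref{zetaAB2})) by applying $\phi \,\mathrm{grad}\,\lambda $ to $\mu $ and computing $\xi (AB)$, then differentiates it along $\xi $ to obtain the product relation (\ref{eqlamda3}), and finally disposes of six cases. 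You instead compute $\xi (X\mu )$ and $\xi (\phi X\mu )$ each in two ways --- once through $X\mu =-2A$, $\phi X\mu =2B$ and (\ref{zetaA}), (\ref{ZetaB}), once through the commutators (\ref{eq5}) and the hypothesis $\xi (\mu )=\upsilon (\mu -2)$ --- and obtain the two factored identities $(1+\lambda -\tfrac{\mu }{2})B=0$ and $(\lambda -1+\tfrac{\mu }{2})A=0$ pointwise. These are strictly stronger than (\ref{eqlamda3}) (they imply both it and (\ref{zetaAB2})) and collapse the case analysis to the single observation that $A=B=0$ on an open set contradicts $[X,\phi X]\lambda =2\upsilon \lambda \neq 0$. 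Both approaches use the same raw ingredients; yours buys a shorter and more transparent dichotomy at no cost. The one point worth making explicit in a final write-up is the passage from the pointwise dichotomy to its validity on a neighbourhood (which you need in order to set $A\equiv 0$ throughout Case V): the sets $\{\mu =2(1+\lambda )\}$ and $\{\mu =2(1-\lambda )\}$ are closed, disjoint (since $\lambda >0$) and cover $M$, hence each is open, and connectedness gives that one of them is all of $M$ --- exactly the paper's conclusion $M=M_{5}$ or $M=M_{6}$.
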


Before the proof of the main Theorem, we will give a Lemma which contains
some necessary relations to prove the main Theorem

\begin{lemma}
Let $(M,\phi ,\xi ,\eta ,g)$ be a non-Sasakian $(\kappa ,\mu ,\upsilon
=const.)$-contact metric manifold .The following formulas are valid.%
\begin{equation}
\xi (A)=(1+\lambda -\frac{\mu }{2})B+\upsilon A,  \label{zetaA}
\end{equation}%
\begin{equation}
\xi (B)=(\lambda -1+\frac{\mu }{2})A+\upsilon B,  \label{ZetaB}
\end{equation}%
\begin{equation}
X\mu =-2A,  \label{Xmu}
\end{equation}%
\begin{equation}
\phi X\mu =2B,  \label{FiXmu}
\end{equation}%
\begin{equation}
\left[ \xi ,\phi grad\lambda \right] =\upsilon (A\phi X-BX)\text{.}
\label{figrad}
\end{equation}

\begin{proof}
Using (\ref{eq10}), (\ref{eq11}) and constant of $\upsilon $, we have the
relations (\ref{zetaA})(\ref{ZetaB}). From (\ref{eq8}) and (\ref{eq9}, \ we
obtain (\ref{Xmu}) and (\ref{FiXmu}). By (\ref{zetak}) and (\ref{B1}), we
have 
\begin{equation}
grad\lambda =AX+B\phi X+\upsilon \lambda \xi ,\text{ \ }\phi grad\lambda
=A\phi X-BX\text{\ .}  \label{grad1}
\end{equation}

Using (\ref{grad1}), (\ref{eq5}), (\ref{zetaA}) and (\ref{ZetaB}), we find

\begin{eqnarray*}
\left[ \xi ,\phi grad\lambda \right] &=&\left[ \xi ,A\phi X-BX\right] \\
&=&(\xi A)\phi X+A\left[ \xi ,\phi X\right] -(\xi B)X-B\left[ \xi ,X\right]
=\upsilon (A\phi X-BX).
\end{eqnarray*}
\end{proof}
\end{lemma}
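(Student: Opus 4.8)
The plan is to derive the first four identities as direct specializations of the general formulas (\ref{eq8})--(\ref{eq11}) of the preceding Lemma, and then to establish (\ref{figrad}) by a short Lie-bracket computation in the $h$-frame. First I would use the hypothesis $\upsilon =const.$ to conclude $C=X\upsilon =0$ and $D=\phi X\upsilon =0$. Feeding $C=D=0$ into (\ref{eq8}) and (\ref{eq9}) collapses them to $X\mu =-2A$ and $\phi X\mu =2B$, which are exactly (\ref{Xmu}) and (\ref{FiXmu}); feeding $C=D=0$ into (\ref{eq10}) and (\ref{eq11}) likewise removes the $\lambda C$ and $\lambda D$ terms and yields (\ref{zetaA}) and (\ref{ZetaB}). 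This part is purely algebraic and uses no new geometry.

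For (\ref{figrad}) I would first compute $\phi \,grad\lambda $ in the orthonormal frame $\{\xi ,X,\phi X\}$. From $\lambda =\sqrt{1-\kappa }$ we get $\kappa =1-\lambda ^{2}$, so on one hand $\xi (\kappa )=-2\lambda \,\xi (\lambda )$, while on the other hand (\ref{zetak}) gives $\xi (\kappa )=2\upsilon (\kappa -1)=-2\upsilon \lambda ^{2}$; comparing the two expressions yields $\xi (\lambda )=\upsilon \lambda $. Hence $grad\lambda =AX+B\phi X+\upsilon \lambda \,\xi $, and applying $\phi $ (with $\phi \xi =0$ and $\phi ^{2}X=-X$ on the contact distribution) gives $\phi \,grad\lambda =A\phi X-BX$.

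The final step is the bracket computation. Expanding by Leibniz, $[\xi ,\phi \,grad\lambda ]=(\xi A)\phi X+A[\xi ,\phi X]-(\xi B)X-B[\xi ,X]$, and I would substitute the structure brackets (\ref{eq5}) together with the values of $\xi A$ and $\xi B$ just obtained in (\ref{zetaA}) and (\ref{ZetaB}). In the coefficient of $\phi X$ the term $(1+\lambda -\frac{\mu }{2})B$ coming from $\xi A$ cancels against $-B(1+\lambda -\frac{\mu }{2})$, and in the coefficient of $X$ the term $(\lambda -1+\frac{\mu }{2})A$ from $A[\xi ,\phi X]$ cancels against the corresponding part of $-\xi B$, leaving precisely $\upsilon A\,\phi X-\upsilon B\,X=\upsilon (A\phi X-BX)$. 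I do not anticipate a substantive obstacle: everything reduces to specialization plus one clean cancellation, and the only point requiring care is keeping the two occurrences of each structure coefficient correctly paired, so that the cross-terms annihilate and only the $\upsilon $-terms survive.
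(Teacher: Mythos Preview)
Your proposal is correct and follows essentially the same route as the paper: specialize (\ref{eq8})--(\ref{eq11}) via $C=D=0$ to obtain the first four formulas, compute $\phi\,grad\lambda=A\phi X-BX$ from $\xi(\lambda)=\upsilon\lambda$, and then expand $[\xi,\phi\,grad\lambda]$ using (\ref{eq5}) together with (\ref{zetaA}) and (\ref{ZetaB}) so that the cross-terms cancel. Your write-up in fact makes the cancellation more explicit than the paper's own proof does.
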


\begin{proof}[Proof of the Main Theorem:]
Let $\left\{ \xi ,X,\phi X\right\} $ be an $h$-frame, such that%
\begin{equation*}
hX=\lambda X,\text{ \ \ }h\phi X=-\lambda \phi X,\text{ \ \ \ }\lambda =%
\sqrt{1-\kappa }
\end{equation*}

in an appropriate neighborhood of an arbitrary point of $M$. Using the
hypothesis $\xi (I_{M})=0$ (i.e. $\xi (\mu )=\upsilon (\mu -2)$) and (\ref%
{Xmu}), (\ref{FiXmu}), we have the following relations,

\begin{equation}
(\phi grad\lambda )\mu =4AB,  \label{eqlamda1}
\end{equation}%
\begin{equation}
\left[ \xi ,\phi grad\lambda \right] \mu =4\upsilon AB,  \label{eqlamda2}
\end{equation}%
\begin{equation}
\xi (AB)=2\upsilon AB,  \label{zeta(AB)}
\end{equation}%
\begin{equation}
A\xi B+B\xi A=2AB\upsilon ,  \label{zeta AB1}
\end{equation}%
\begin{equation}
A^{2}(\lambda -1+\frac{\mu }{2})+B^{2}(1+\lambda -\frac{\mu }{2})=0\text{.}
\label{zetaAB2}
\end{equation}

Differentiating the relation (\ref{zetaAB2}) with respect to $\xi $ and
using the relations (\ref{zetak}), \ $\xi (\mu )=\upsilon (\mu -2)$, (\ref%
{zetaA}) and (\ref{ZetaB}) we can successively obtain 
\begin{equation}
(1+\lambda -\frac{\mu }{2})(\lambda -1+\frac{\mu }{2})AB=0.  \label{eqlamda3}
\end{equation}%
We distinguish following cases:

\qquad Case I) $M_{1}=\{P\in M\mid A(P)=0$, $B(P)=0$ $\}$, or

\qquad Case II) $M_{2}=\{P\in M\mid A(P)=0$, $B(P)\neq 0\}$, or

\qquad Case III) $M_{3}=\{P\in M\mid A(P)\neq 0$, $B(P)=0$ $\}$, or

\qquad Case IV) $M_{4}=\{P\in M\mid (1+\lambda -\frac{\mu }{2})(P)=0$, $%
(\lambda -1+\frac{\mu }{2})(P)=0\}$, or

\qquad Case V) $M_{5}=\{P\in M\mid (1+\lambda -\frac{\mu }{2})(P)=0$, $%
(\lambda -1+\frac{\mu }{2})(P)\neq 0\}$, or

\qquad Case VI) $M_{6}=\{P\in M\mid (1+\lambda -\frac{\mu }{2})(P)\neq 0$, $%
(\lambda -1+\frac{\mu }{2})(P)=0\}$.

Firstly we will examine the Case I and the Case IV. We assume that the Case
I is true. In this case, by (\ref{eq6}) and (\ref{zetak}), we get $\xi
(\lambda )=\upsilon \lambda =0.$ Since $\upsilon \neq 0$, we obtain that $%
\lambda (P)=0$. This requires$\ $that $\kappa (P)=1$ which is contradiction
with $\kappa (P)<1$. Let us suppose that the Case IV is valid. But in this
situation, we have $\lambda (P)=0$, or equivalently $\kappa (P)=1$, which is
impossible by the assumption of the mainTheorem. Secondly we consider the
Case II. From the formula (\ref{zetaAB2}), we find $(1+\lambda -\frac{\mu }{2%
})(P)=0$ which appeares in the Case IV or the Case V. Similarly, the Case
III is included in the Case IV or the Case VI. Finally, as the Case IV is
impossible we only consider the Case V and the Case VI. Since $M_{5}$ and $%
M_{6}$ disjoint open sets and the Case IV is impossible, we have $M_{5}$ $%
\cup $ $M_{6}=M.$ Due to the fact that $M$ is connected, we conclude that $%
\{M=M_{5}$ and $M_{6}=\varnothing \}$ or $\{M_{5}=\varnothing $ and $%
M_{6}=M\}$. Regarding the Case V we have $\mu =2(1+\lambda )$, or
equivalently $\mu =2(1+\sqrt{1-\kappa })$ at any point $M$. Similarly,
regarding the Case VI we obtain $\mu =2(1-\lambda )=2(1-\sqrt{1-\kappa )}$.
Therefore, $(1)$ is proved. Now, we will examine the cases $\mu =2(1+\sqrt{%
1-\kappa })$ and $\mu =2(1-\sqrt{1-\kappa })$.

Case V: $\mu =2(1+\sqrt{1-\kappa }).$

Let $P\in M$ and $\{\xi ,X,\phi X\}$ be an $h$-frame on a neighborhood $U$
of $P.$ Using the assumption $\mu =2(1+\sqrt{1-\kappa })$ and (\ref{zetaAB2}%
) we obtain $A=0$ and thus the relations (\ref{eq5}) and (\ref{eq6}) reduce
to%
\begin{equation}
\lbrack \xi ,X]=0,  \label{eq4.1}
\end{equation}%
\begin{equation}
\text{\ }[\xi ,\phi X]=2\lambda X,\text{ \ }  \label{eq4.2}
\end{equation}%
\begin{equation}
\text{\ }[X,\phi X]=-\frac{B}{2\lambda }X+2\xi .  \label{eq4.3}
\end{equation}%
Since $[\xi ,X]=0$, the distribution which is spanned by\ $\xi $ and $X$ is
integrable and so for any $q\in V$ there exist a chart $(V,(x,y,z)\}$ at $%
P\in V\subset U$, such that 
\begin{equation}
\xi =\frac{\partial }{\partial x},\text{ \ \ }X=\frac{\partial }{\partial y},%
\text{ \ \ }\phi X=a\frac{\partial }{\partial x}+b\frac{\partial }{\partial y%
}+c\frac{\partial }{\partial z},  \label{eq4.4}
\end{equation}%
where $a$, $b$ and $c$ are smooth functions on $V$. Since $\xi $, $X$ and $%
\phi X$ are lineraly independent we have $c\neq 0$ at any point of $V$. By
using (\ref{eq4.4}), (\ref{zetak}) and $A=0$ we obtain%
\begin{equation}
\frac{\partial \lambda }{\partial x}=\upsilon \lambda \text{ \ \ and \ }%
\frac{\partial \lambda }{\partial y}=0\text{\ .}  \label{eq4.5}
\end{equation}%
From (\ref{eq4.5}) we find 
\begin{equation}
\lambda =r(z)e^{\upsilon x},  \label{eqlamda}
\end{equation}%
where $r(z)$ is smooth function of $z$ defined on $V$. By using (\ref{eq4.2}%
), (\ref{eq4.3}) and (\ref{eq4.4}) we have following partial differential
equations:%
\begin{equation}
\frac{\partial a}{\partial x}=0,\text{ \ }\frac{\partial b}{\partial x}%
=2\lambda ,\text{ \ \ }\frac{\partial c}{\partial x}=0,  \label{eq4.6}
\end{equation}%
\begin{equation}
\frac{\partial a}{\partial y}=2,\text{ \ }\frac{\partial b}{\partial y}=-%
\frac{B}{2\lambda },\text{ \ \ }\frac{\partial c}{\partial y}=0.
\label{eq4.7}
\end{equation}%
From $\frac{\partial c}{\partial x}=\frac{\partial c}{\partial y}=0$ it
follows that $c=c(z)$ and because of the fact that $c\neq 0$, we can assume
that $c=1$ through a reparametrization of the variable $z$. For the sake of
simplicity we will continue to use the same coordinates $(x,y,z),$ taking
into account that $c=1$ in the relations that we have occured. From $\frac{%
\partial a}{\partial x}=0,$ $\frac{\partial a}{\partial y}=2$ we obtain 
\begin{equation*}
a=a(x,y,z)=2y+f(z),
\end{equation*}%
where $f(z)$ is smooth function of $z$ defined on $V$. Differentiating $%
\lambda $ with respect to $\phi X$ and using (\ref{eq4.5}) we have%
\begin{equation}
B=[(2y+f(z))\upsilon r(z)+r^{\prime }(z)]e^{\upsilon x},  \label{eq4.8}
\end{equation}%
where $r^{\prime }(z)=\frac{\partial r}{\partial z}$. By using the relations 
$\frac{\partial b}{\partial x}=2\lambda $, $\frac{\partial b}{\partial y}=-%
\frac{B}{2\lambda }$ and (\ref{eqlamda}) we get 
\begin{equation*}
b=-\frac{y}{2}(y\upsilon +\upsilon f(z)+\frac{r^{\prime }(z)}{r(z)})+\frac{2%
}{\upsilon }r(z)e^{\upsilon x}+s(z),
\end{equation*}%
where $s(z)$ is smooth function of $z$ defined on $V$. We will calculate the
tensor fields $\eta $, $\phi $, $g$ and $h$ with respect to the basis $\frac{%
\partial }{\partial x}$, $\frac{\partial }{\partial y}$, $\frac{\partial }{%
\partial z}$. For the components $g_{ij}$ of the Riemannian metric, using (%
\ref{eq4.4}) we have 
\begin{equation*}
g_{11}=g(\frac{\partial }{\partial x},\frac{\partial }{\partial x})=g(\xi
,\xi )=1,\text{ \ }g_{22}=g(\frac{\partial }{\partial y},\frac{\partial }{%
\partial y})=g(X,X)=1,\text{ }
\end{equation*}%
\begin{equation*}
\text{\ \ }g_{12}=g_{21}=g(\frac{\partial }{\partial x},\frac{\partial }{%
\partial y})=0,
\end{equation*}%
\begin{eqnarray*}
g_{13} &=&g_{31}=g(\frac{\partial }{\partial x},\phi X-a\frac{\partial }{%
\partial x}-b\frac{\partial }{\partial y}) \\
&=&g(\xi ,\phi X)-ag_{11}=-a,
\end{eqnarray*}%
\begin{eqnarray*}
g_{23} &=&g_{32}=g(\frac{\partial }{\partial y},\phi X-a\frac{\partial }{%
\partial x}-b\frac{\partial }{\partial y}) \\
&=&g(X,\phi X)-ag_{12}-bg_{22}=-b,
\end{eqnarray*}%
\begin{eqnarray*}
1 &=&g(\phi X,\phi X)=g_{33}=a^{2}+b^{2}+g_{33}+2abg_{12}+2ag_{13}+2bg_{23}
\\
&=&a^{2}+b^{2}+g_{33}-2a^{2}-2b^{2}=g_{33}-a^{2}-b^{2},
\end{eqnarray*}%
from which we obtain $g_{33}=1+a^{2}+b^{2}$.

The components of the tensor field $\phi $ are immediate consequences of 
\begin{equation*}
\phi (\xi )=\phi (\frac{\partial }{\partial x})=0,\text{ \ \ }\phi (\frac{%
\partial }{\partial y})=\phi X=a\frac{\partial }{\partial x}+b\frac{\partial 
}{\partial y}+\frac{\partial }{\partial z},
\end{equation*}%
\begin{eqnarray*}
\phi (\frac{\partial }{\partial z}) &=&\phi (\phi X-a\frac{\partial }{%
\partial x}-b\frac{\partial }{\partial y})=\phi ^{2}X-a\phi (\frac{\partial 
}{\partial x})-b\phi (\frac{\partial }{\partial y}) \\
&=&-X-b(a\frac{\partial }{\partial x}+b\frac{\partial }{\partial y}+\frac{%
\partial }{\partial z}) \\
&=&-\frac{\partial }{\partial y}-ab\frac{\partial }{\partial x}-b^{2}\frac{%
\partial }{\partial y}-b\frac{\partial }{\partial z} \\
&=&-ab\frac{\partial }{\partial x}-(1+b^{2})\frac{\partial }{\partial y}-b%
\frac{\partial }{\partial z}.
\end{eqnarray*}%
The expression of the 1-form $\eta $, immediately follows from $\eta (\xi
)=1 $, $\eta (X)=\eta (\phi X)=0$%
\begin{equation*}
\eta =dx-adz.
\end{equation*}%
Now we calculate the components of the tensor field $h$ with respect to the
basis $\frac{\partial }{\partial x}$, $\frac{\partial }{\partial y}$, $\frac{%
\partial }{\partial z}$.%
\begin{equation*}
h(\xi )=h(\frac{\partial }{\partial x})=0,\text{ \ \ }h(\frac{\partial }{%
\partial y})=\lambda \frac{\partial }{\partial y},
\end{equation*}%
\begin{eqnarray*}
h(\frac{\partial }{\partial z}) &=&h(\phi X-a\frac{\partial }{\partial x}-b%
\frac{\partial }{\partial y}) \\
&=&h\phi X-ah(\frac{\partial }{\partial x})-bh(\frac{\partial }{\partial y})
\\
&=&-\lambda \phi X-b\lambda \frac{\partial }{\partial y} \\
&=&-\lambda (a\frac{\partial }{\partial x}+b\frac{\partial }{\partial y}+%
\frac{\partial }{\partial z})-b\lambda \frac{\partial }{\partial y},
\end{eqnarray*}%
\begin{equation*}
h(\frac{\partial }{\partial z})=-\lambda a\frac{\partial }{\partial x}%
-2b\lambda \frac{\partial }{\partial y}-\lambda \frac{\partial }{\partial z}.
\end{equation*}%
Thus the proof of the Case V is completed.

Case VI): $\mu =2(1-\sqrt{1-\kappa }).$

As in the Case V, we consider an $h$-frame $\{\xi ,X,\phi X\}$. Using the
assumption $\mu =2(1-\sqrt{1-\kappa })$ and (\ref{zetaAB2}) we obtain $B=0$
and thus the relation (\ref{eq5}) is written as%
\begin{equation}
\lbrack \xi ,X]=2\lambda \phi X,  \label{eq4.9}
\end{equation}%
\begin{equation}
\text{\ }[\xi ,\phi X]=0,\text{ \ }  \label{eq4.10}
\end{equation}%
\begin{equation}
\text{\ }[X,\phi X]=\frac{A}{2\lambda }\phi X+2\xi .  \label{eq4.11}
\end{equation}%
Because of (\ref{eq4.10}) we find that there is a chart $(V^{\prime
},(x,y,z))$ such that 
\begin{equation*}
\xi =\frac{\partial }{\partial x},\text{ \ \ \ }\phi X=\frac{\partial }{%
\partial y}
\end{equation*}%
on $V^{\prime }$. We put%
\begin{equation*}
X=a\frac{\partial }{\partial x}+b\frac{\partial }{\partial y}+c\frac{%
\partial }{\partial z},
\end{equation*}%
where $a,b,c$ are smooth functions defined on $V^{\prime }.$ As in the Case
V, we can directly calculate the tensor fields $\eta $, $\phi $, $g$ and $h$
with respect to the basis $\frac{\partial }{\partial x}$, $\frac{\partial }{%
\partial y}$, $\frac{\partial }{\partial z}$. \ This completes the proof of
the mainTheorem.
\end{proof}

In the following Theorem, we will locally \ construct $(\kappa ,\mu
,\upsilon =const.\neq 0)$-contact metric manifolds with $\kappa <1$ and $\xi
(I_{M})=0$.

\begin{theorem}
Let $\kappa :I=I_{1}\times I_{2}\subset 
\mathbb{R}
^{2}\rightarrow 
\mathbb{R}
$ be a smooth function defined on open subset $I$ of $%
\mathbb{R}
^{2}$, such that $\kappa (x,z)=1-(r(z)e^{vx})^{2}<1$ for any $(x,z)\in I,$
where $r(z)$ is a smooth function on open interval $I_{2}$ and $\upsilon $
is constant different from zero. Then we can construct two families of
non-Sasakian $(\kappa _{i},\mu _{i},v)$-manifolds $(M_{i},\phi _{i},\xi
_{i},\eta _{i},g_{i})$, $i=1,2$, in the set $M=I\times 
\mathbb{R}
\subset 
\mathbb{R}
^{3}$, so that for any $P(x,z,y)\in M$, the following are valid.%
\begin{equation*}
\kappa _{1}(P)=\kappa _{2}(P)=\kappa (x,z),\text{ \ \ }\mu _{1}(P)=2(1+\sqrt{%
1-\kappa (x,z)}\text{ \ \ and \ \ }\mu _{2}(P)=2(1-\sqrt{1-\kappa (x,z)}
\end{equation*}

Each family is determined by two arbitrary smooth functions of two variables.

\begin{proof}
We put $\lambda (x,z)=\sqrt{1-\kappa (x,z)}=r(z)e^{vx}>0$ and \ consider on $%
M$ the linearly independent vector fields%
\begin{equation}
\xi _{1}=\frac{\partial }{\partial x},\text{ \ \ }X_{1}=\frac{\partial }{%
\partial y}\text{ and }Y_{1}=a\frac{\partial }{\partial x}+b\frac{\partial }{%
\partial y}+\frac{\partial }{\partial z},  \label{eq4.12}
\end{equation}%
where $a(x,y,z)=2y+f(z)$, $b(x,y,z)=-\frac{y}{2}(y\upsilon +\upsilon f(z)+%
\frac{r^{\prime }(z)}{r(z)})+\frac{2}{\upsilon }r(z)e^{\upsilon x}+s(z)$, $%
f(z),$ $r(z),$ $s(z)$ are arbitrary smooth functions of $z$. The structure
tensor fields $\eta _{1},g_{1},\phi _{1}$ are defined by $\eta _{1}=$ $%
dx-(2y+f(z))dz$, $g_{1}=\left( 
\begin{array}{ccc}
1 & 0 & -a \\ 
0 & 1 & -b \\ 
-a & -b & 1+a^{2}+b^{2}%
\end{array}%
\right) $ and $\phi _{1}=\left( 
\begin{array}{ccc}
0 & a & -ab \\ 
0 & b & -1-b^{2} \\ 
0 & 1 & -b%
\end{array}%
\right) $, respectively. From (\ref{eq4.12}), we can easily obtain 
\begin{eqnarray}
\lbrack \xi _{1},X_{1}] &=&0\text{, \ \ }[\xi _{1},Y_{1}]=2\lambda (x,z)X_{1}%
\text{, \ \ }  \label{4.12a} \\
\lbrack X_{1},Y_{1}] &=&-\frac{[(2y+f(z))\upsilon r(z)+r^{\prime
}(z)]e^{\upsilon x}}{2\lambda (x,z)}X_{1}+2\xi _{1}.  \label{4.12b}
\end{eqnarray}%
Since $\eta _{1}\wedge d\eta _{1}=-2dx\wedge dy\wedge dz\neq 0$ everywhere
on $M$, we decide that $\eta _{1}$ is a contact form. By using just defined $%
g_{1}$ and $\phi _{1}$, we find\ $\eta _{1}=g(.,\xi _{1}),$ $\phi
_{1}X_{1}=Y_{1}$, $\phi _{1}Y_{1}=-X_{1}$, $\phi _{1}\xi _{1}=0$ and $d\eta
_{1}(Z,W)=g_{1}(Z,\phi _{1}W)$, $g_{1}(\phi _{1}Z,\phi
_{1}W)=g_{1}(Z,W)-\eta _{1}(Z)\eta _{1}(W)$ for any $Z$, $W\in \Gamma (M)$.
From the well known Koszul's formula and (\ref{B2}), we obtain%
\begin{equation}
\nabla _{X_{1}}\xi _{1}=-(\lambda (x,z)+1)Y_{1},\text{ \ \ }\nabla
_{Y_{1}}\xi =(1-\lambda (x,z))X_{1},  \label{eq4.13}
\end{equation}%
\begin{equation}
\nabla _{\xi _{1}}\xi _{1}=0,\text{ \ \ }\nabla _{\xi _{1}}X_{1}=-(1+\lambda
(x,z))Y_{1},\text{ \ \ }\nabla _{\xi _{1}}Y_{1}=(1+\lambda (x,z))X_{1},
\label{eq414}
\end{equation}%
\begin{equation}
\nabla _{X_{1}}X_{1}=\frac{[(2y+f(z))\upsilon r(z)+r^{\prime
}(z)]e^{\upsilon x}}{2\lambda (x,z)}Y_{1},\text{ \ \ }\nabla _{Y_{1}}Y_{1}=0,
\label{eq4.15}
\end{equation}%
\begin{eqnarray}
\nabla _{Y_{1}}X_{1} &=&(\lambda (x,z)-1)\xi _{1},\text{ \ \ }
\label{eq4.16} \\
\nabla _{X_{1}}Y_{1} &=&-\frac{[(2y+f(z))\upsilon r(z)+r^{\prime
}(z)]e^{\upsilon x}}{2\lambda (x,z)}X_{1}+(\lambda (x,z)+1)\xi _{1},
\label{eq4.17}
\end{eqnarray}%
$h_{1}\phi _{1}X_{1}=-\lambda (x,z)\phi _{1}X_{1}$ and $h_{1}X_{1}=\lambda
(x,z)X_{1}$, where $\nabla $ is Levi-Civita connection of $g_{1}$. By using
\ the relations (\ref{eq4.13})-(\ref{eq4.17}) we obtain 
\begin{eqnarray*}
R(X_{1},\xi _{1})\xi _{1} &=&\kappa _{1}X_{1}+\mu _{1}h_{1}X_{1}+v\phi
_{1}h_{1}X_{1},\text{ \ } \\
\text{\ }R(Y_{1},\xi _{1})\xi _{1} &=&\kappa _{1}Y_{1}+\mu
_{1}h_{1}Y_{1}+v\phi _{1}h_{1}Y_{1}, \\
R(X_{1,}Y_{1})\xi _{1} &=&0.
\end{eqnarray*}%
From the above relations and by virtue of the linearity of \ the curvature
tensor $R$, we conclude that 
\begin{equation*}
R(Z,W)\xi _{1}=(\kappa _{1}I+\mu _{1}h_{1}+\upsilon \phi _{1}h_{1})(\eta
_{1}(Z)W-\eta _{1}(W)Z)
\end{equation*}%
for any $Z,W\in \Gamma (M),$ i.e. $(M,\phi _{1},\xi _{1},\eta _{1},g_{1})$
is $(\kappa _{1},\mu _{1},\upsilon =const.)$ contact metric manifold with $%
\xi (I_{M})=0$ and thus the construction of the first \ family is completed.
For the second construction, we consider the vector fields 
\begin{equation}
\xi _{2}=\frac{\partial }{\partial x},\text{ \ \ }Y_{2}=\frac{\partial }{%
\partial y}\text{ },
\end{equation}%
\begin{equation}
X_{2}=(-2y+f(z))\frac{\partial }{\partial x}+(\frac{y^{2}}{2}\upsilon -y%
\frac{f(z)}{2}\upsilon -\frac{y}{2}\frac{r^{^{\prime }}(z)}{r(z)}+\frac{2}{%
\upsilon }r(z)e^{\upsilon x}+s(z))\frac{\partial }{\partial y}+\frac{%
\partial }{\partial z}
\end{equation}%
and define the tensor fields $\eta _{2},g_{2},\phi _{2},h_{2}$ as follows:%
\begin{equation*}
\eta _{2}=dx-(-2y+f(z))dz
\end{equation*}%
\begin{equation*}
\text{\ }g_{2}=\left( 
\begin{array}{ccc}
1 & 0 & -a \\ 
0 & 1 & -b \\ 
-a & -b & 1+a^{2}+b^{2}%
\end{array}%
\right) ,\text{ }\phi =\left( 
\begin{array}{ccc}
0 & -a & ab \\ 
0 & -b & 1+b^{2} \\ 
0 & -1 & b%
\end{array}%
\right) ,
\end{equation*}%
\begin{equation*}
\text{\ \ \ \ }h_{2}=\left( 
\begin{array}{ccc}
0 & 0 & a\lambda _{2} \\ 
0 & -\lambda _{2} & 2\lambda _{2}b \\ 
0 & 0 & \lambda _{2}%
\end{array}%
\right)
\end{equation*}%
with respect to the basis $\left( \frac{\partial }{\partial x},\frac{%
\partial }{\partial y},\frac{\partial }{\partial z}\right) ,$ where $%
a=-2y+f(z)$, $b=(\frac{y^{2}}{2}\upsilon -y\frac{f(z)}{2}\upsilon -\frac{y}{2%
}\frac{r^{^{\prime }}(z)}{r(z)}+\frac{2}{\upsilon }r(z)e^{\upsilon x}+s(z)$.
As in first construction, we say that $(M,\phi _{2},\xi _{2},\eta
_{2},g_{2}) $ is $(\kappa _{2},\mu _{2},\upsilon =const.)$-contact metric
manifold with $\xi (I_{M})=0$, where $\kappa _{2}(x,y,z)=\kappa
_{2}(x,z)=r(z)e^{vx}$, $\mu _{2}(x,y,z)=2(1-\sqrt{\kappa _{2}(x,z)})$. This
completes the proof of the Theorem.
\end{proof}
\end{theorem}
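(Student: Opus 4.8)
The plan is to construct, by hand, the two promised families of examples and to verify directly that each satisfies the defining curvature condition \eqref{k,mu,2} with $\upsilon$ constant. Concretely, on $M=I\times\mathbb{R}\subset\mathbb{R}^3$ with coordinates $(x,y,z)$ I would write down the three vector fields $\xi_1,X_1,Y_1$ exactly as in \eqref{eq4.12}, where $a$, $b$ are the explicit functions forced by the proof of the Main Theorem (so that $\lambda=\sqrt{1-\kappa}=r(z)e^{\upsilon x}$, and $a=2y+f(z)$, $b$ the corresponding quadratic-in-$y$ expression). Then I would declare $\eta_1$, $g_1$, $\phi_1$ by the matrices appearing in the Main Theorem. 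The first batch of checks is purely algebraic: from the given matrices one verifies $\eta_1=g_1(\cdot,\xi_1)$, $\phi_1\xi_1=0$, $\phi_1X_1=Y_1$, $\phi_1Y_1=-X_1$, $\phi_1^2=-I+\eta_1\otimes\xi_1$, the compatibility $g_1(\phi_1 Z,\phi_1 W)=g_1(Z,W)-\eta_1(Z)\eta_1(W)$, and $d\eta_1(Z,W)=g_1(Z,\phi_1 W)$; together with $\eta_1\wedge d\eta_1=-2\,dx\wedge dy\wedge dz\neq0$ this shows $(M,\phi_1,\xi_1,\eta_1,g_1)$ is a contact metric manifold. One also reads off $h_1X_1=\lambda X_1$, $h_1Y_1=-\lambda Y_1$, $h_1\xi_1=0$, matching the stated $h$-matrix.

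Next I would compute the Levi-Civita connection. The bracket relations \eqref{4.12a}--\eqref{4.12b} follow by a short direct computation from \eqref{eq4.12}; feeding them, together with $g_1$, into Koszul's formula yields the covariant derivatives \eqref{eq4.13}--\eqref{eq4.17} (and $\nabla_{\xi_1}\xi_1=0$). These are exactly the $h$-frame formulas \eqref{eq1}--\eqref{eq4} specialized to $A=0$, $\lambda=r(z)e^{\upsilon x}$, which is reassuring and serves as an internal consistency check. From the connection I would compute $R(X_1,\xi_1)\xi_1$, $R(Y_1,\xi_1)\xi_1$, $R(X_1,Y_1)\xi_1$ directly via $R(U,W)=\nabla_U\nabla_W-\nabla_W\nabla_U-\nabla_{[U,W]}$; the outcome is $R(X_1,\xi_1)\xi_1=\kappa_1 X_1+\mu_1 h_1X_1+\upsilon\,\phi_1 h_1 X_1$, similarly for $Y_1$, and $R(X_1,Y_1)\xi_1=0$, with $\kappa_1=1-\lambda^2$, $\mu_1=2(1+\lambda)$. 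Because every vector is a $C^\infty(M)$-combination of $\xi_1,X_1,Y_1$ and $R(\cdot,\cdot)\xi_1$ is $C^\infty(M)$-linear, these three identities upgrade to the full condition $R(Z,W)\xi_1=(\kappa_1 I+\mu_1 h_1+\upsilon\phi_1 h_1)(\eta_1(Z)W-\eta_1(W)Z)$ for all $Z,W$. Since $\mu_1=2(1+\sqrt{1-\kappa_1})$ one has $1-\tfrac{\mu_1}{2}=-\sqrt{1-\kappa_1}$, hence $I_M=-1$ is (globally) constant, so in particular $\xi(I_M)=0$; and $\kappa_1<1$, $\upsilon\neq0$ give the non-Sasakian hypothesis. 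The functions $f,r,s$ (equivalently, two free functions of two variables after absorbing $r$ into $\kappa$) are genuinely arbitrary, proving the "two arbitrary smooth functions" clause.

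For the second family I would repeat the construction with the roles of the $\lambda$- and $(-\lambda)$-eigenspaces of $h$ interchanged: take $\xi_2=\partial_x$, $Y_2=\partial_y$ (now the $(-\lambda)$-eigenvector), and $X_2$ the explicit field given in the statement, with $a=-2y+f(z)$ and $b$ the mirrored quadratic expression. Declaring $\eta_2,g_2,\phi_2,h_2$ by the stated matrices, the same sequence of verifications — contact metric axioms, then Koszul, then the curvature — goes through verbatim, yielding $\kappa_2=1-\lambda^2$ and $\mu_2=2(1-\sqrt{1-\kappa_2})$, for which $I_M=+1$. I expect the main obstacle to be purely computational bookkeeping: Koszul's formula with the non-diagonal metric $g_i$ produces several terms, and the curvature computation must be pushed far enough to see the coefficients organize into exactly $\kappa_i I+\mu_i h_i+\upsilon\phi_i h_i$ rather than merely "something supported on the $\xi$-direction." A convenient shortcut is to work in the orthonormal $h$-frame $\{\xi_i,X_i/|X_i|,\dots\}$ — but here $X_1,Y_1,\xi_1$ are already orthonormal — and simply invoke the already-derived frame identities \eqref{eq1}--\eqref{eq6} to shorten the connection step, so that only the final curvature contraction needs to be done in full. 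No step requires any idea beyond careful calculation, so the proposal is essentially to carry out these verifications and record the explicit bracket/connection relations \eqref{4.12a}--\eqref{eq4.17} along the way.
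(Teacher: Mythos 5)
Your proposal is correct and follows essentially the same route as the paper: write down the explicit frame $\xi_i,X_i,Y_i$ and the tensors $\eta_i,g_i,\phi_i,h_i$, verify the contact metric axioms and the bracket relations, obtain the connection from Koszul's formula, and check the curvature identity on the frame before extending by $C^\infty$-linearity. Your added observation that $I_M=\mp1$ is constant (so $\xi(I_M)=0$ automatically) is a harmless and correct supplement to what the paper records.
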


The Ricci operator $Q$ was given in the relation (\ref{Ricci Q}) for any $%
(\kappa ,\mu ,\upsilon )$-contact metric manifold $(M^{3},\phi ,\xi ,\eta
,g) $. If we carefully look at this relation, the scalar curvature $\tau $
is not obvious. Now, we will give the scalar curvature $\tau $ \ respect to $%
\kappa $, $\mu $ and $\upsilon $ for $(\kappa ,\mu ,\upsilon =const.)$%
-contact metric manifold.

\begin{theorem}
Let $(M,\phi ,\xi ,\eta ,g)$ be a non Sasakian $(\kappa ,\mu ,\upsilon
=const.)$-contact metric manifold. Then, 
\begin{equation*}
\bigtriangleup \lambda =X(A)+\phi X(B)+\upsilon ^{2}\lambda -\frac{1}{%
2\lambda }(A^{2}+B^{2})
\end{equation*}%
and 
\begin{equation*}
\tau =\frac{1}{\lambda }(\bigtriangleup \lambda -\upsilon ^{2}\lambda )-%
\frac{1}{\lambda ^{2}}\parallel grad\lambda \parallel ^{2}+2(\kappa -\mu ),
\end{equation*}%
where $\bigtriangleup \lambda $ is Laplacian of $\lambda $.
\end{theorem}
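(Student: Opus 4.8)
The plan is to compute both quantities directly in the $h$-frame $\{\xi, X, \phi X\}$ of Lemma 2, using the structure equations (\ref{eq1})--(\ref{eq6}) together with the simplified relations (\ref{zetaA})--(\ref{grad1}) that hold when $\upsilon$ is constant. For the Laplacian, I would start from the coordinate-free formula $\bigtriangleup \lambda = \sum_{i=1}^{3} \big( X_i(X_i \lambda) - (\nabla_{X_i} X_i)\lambda \big)$ with $X_1 = \xi$, $X_2 = X$, $X_3 = \phi X$. The term $X_i(X_i\lambda)$ splits into $\xi(\xi\lambda) = \xi(\upsilon\lambda) = \upsilon^2\lambda$ (using (\ref{zetak}) and $\upsilon = const.$), plus $X(A)$ and $\phi X(B)$ by the definitions $A = X\lambda$, $B = \phi X\lambda$. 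For the connection term, $\nabla_\xi \xi = 0$ contributes nothing, while (\ref{eq3}) gives $\nabla_X X = \tfrac{B}{2\lambda}\phi X$ and $\nabla_{\phi X}\phi X = \tfrac{A}{2\lambda}X$, so $(\nabla_X X)\lambda + (\nabla_{\phi X}\phi X)\lambda = \tfrac{B}{2\lambda}\cdot B + \tfrac{A}{2\lambda}\cdot A = \tfrac{1}{2\lambda}(A^2+B^2)$. Assembling these gives the first formula.

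For the scalar curvature, the idea is to exploit $\kappa = \tfrac{Trl}{2}$ from (\ref{hsq}) and relate $Trl$ to $\tau$ via the Ricci operator (\ref{Ricci Q}); but more efficiently, I would take the trace of (\ref{Ricci Q}) to get $\tau = 3(\tfrac{\tau}{2}-\kappa) + (-\tfrac{\tau}{2}+3\kappa) + \mu\, trh + \upsilon\, tr(\phi h)$, which (since $trh = tr(\phi h) = 0$) is an identity and yields nothing new — so instead I would compute $\tau$ by summing sectional-curvature-type contributions, i.e. $\tau = \sum_{i,j} g(R(X_i,X_j)X_j, X_i)$, splitting off the terms involving $\xi$. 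The terms $g(R(X,\xi)\xi,X)$ and $g(R(\phi X,\xi)\xi,\phi X)$ come from (\ref{k,mu,2}) with the $h$-eigenvalues: these give $2(\kappa + \lambda\mu \cdot 0 + \dots)$ — more precisely $g(R(X,\xi)\xi, X) = \kappa + \mu\lambda$ and $g(R(\phi X,\xi)\xi,\phi X) = \kappa - \mu\lambda$ — wait, I should be careful; using $hX = \lambda X$, $\phi h X = -\lambda\phi X$, (\ref{k,mu,2}) gives $R(X,\xi)\xi = \kappa X + \mu\lambda X$ and $R(\phi X,\xi)\xi = \kappa\phi X - \mu\lambda\phi X$, contributing $2\kappa$ to $\tau$ after the factor-of-2 from symmetry cancels the sign pattern. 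The remaining piece is the sectional curvature $K(X,\phi X)$, which I would compute from $[X,\phi X]$ in (\ref{eq6}) and the connection formulas (\ref{eq3}), (\ref{eq4}) via $K(X,\phi X) = g(\nabla_X\nabla_{\phi X}\phi X - \nabla_{\phi X}\nabla_X \phi X - \nabla_{[X,\phi X]}\phi X, X)$. This produces terms in $X(A)$, $\phi X(B)$, $A^2$, $B^2$, $\lambda$, and $\mu$; comparing with the Laplacian formula just derived lets me substitute $X(A) + \phi X(B) = \bigtriangleup\lambda - \upsilon^2\lambda + \tfrac{1}{2\lambda}(A^2+B^2)$ and $\|grad\lambda\|^2 = A^2 + B^2 + \upsilon^2\lambda^2$ (from (\ref{grad1})), collapsing everything to $\tau = \tfrac{1}{\lambda}(\bigtriangleup\lambda - \upsilon^2\lambda) - \tfrac{1}{\lambda^2}\|grad\lambda\|^2 + 2(\kappa - \mu)$.

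The main obstacle I anticipate is the bookkeeping in the sectional curvature computation $K(X,\phi X)$: one must carefully differentiate the coefficients $\tfrac{A}{2\lambda}$, $\tfrac{B}{2\lambda}$ in (\ref{eq3})--(\ref{eq4}) (which requires $X(1/\lambda)$, $\phi X(1/\lambda)$, hence $A/\lambda^2$, $B/\lambda^2$), and track how the $\pm(\lambda \mp 1)\xi$ and $\pm(\lambda\pm1)\xi$ terms in (\ref{eq4}) interact with $\nabla\xi$ from (\ref{eq1}). A secondary subtlety is making sure the $\mu$-dependence enters only through the $\xi$-direction curvature and not through $K(X,\phi X)$ — this is where the hypothesis that we are in a $(\kappa,\mu,\upsilon)$-contact metric manifold (so the $\xi$-sectional curvatures are pinned down by (\ref{k,mu,2})) does the essential work, and the identity (\ref{eq7}), i.e. $\xi(\kappa) = 2\upsilon(\kappa-1)$ rewritten as $\xi(\lambda) = \upsilon\lambda$, is what keeps the $\xi(\xi\lambda)$ term clean. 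Once those pieces are in hand the algebra is routine substitution.
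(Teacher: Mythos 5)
Your proposal follows essentially the same route as the paper: the Laplacian comes from the identical frame computation using (\ref{eq3}), $\nabla_{\xi}\xi=0$ and $\xi(\lambda)=\upsilon\lambda$, and $\tau$ is obtained by the same decomposition $\tau=2g(R(X,\phi X)\phi X,X)+(\text{$\xi$-plane terms})$, with the $(X,\phi X)$-curvature computed from (\ref{eq1})--(\ref{eq6}) exactly as you outline. The only wrinkle is in your prose accounting of the $\xi$-plane terms: with the symmetry factor they contribute $2g(Q\xi,\xi)=4\kappa$ (not $2\kappa$), which is precisely what turns $-2(\kappa+\mu)$ into the stated $2(\kappa-\mu)$ — your final formula already reflects this, so the slip is cosmetic.
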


\begin{proof}
Using the definition of the Laplacian and together by Lemma 1, we have 
\begin{eqnarray*}
\bigtriangleup \lambda &=&XX(\lambda )+\phi X\phi X(\lambda )+\xi \xi
(\lambda ) \\
&&-(\bigtriangledown _{X}X)\lambda -(\bigtriangledown _{\phi X}\phi
X)\lambda -(\bigtriangledown _{\xi }\xi )\lambda \\
&=&X(A)+\phi X(B)+\upsilon ^{2}\lambda -\frac{1}{2\lambda }(A^{2}+B^{2}).
\end{eqnarray*}

For the computing scalar curvature $\tau $ of $M$, we will use (\ref{eq1})-(%
\ref{eq4}). Defining the curvature tensor $R$, we obtain%
\begin{eqnarray*}
R(X,\phi X)\phi X &=&\bigtriangledown _{X}\nabla _{\phi X}\text{ }\phi
X-\bigtriangledown _{\phi X}\nabla _{X}\text{ }\phi X-\nabla _{\left[ X,\phi
X\right] }\phi X \\
&=&\nabla _{X}\left( \frac{A}{2\lambda }X\right) -\nabla _{\phi X}\left( -%
\frac{B}{2\lambda }X+(1+\lambda )\xi \right) -\nabla _{-\frac{B}{2\lambda }X+%
\frac{A}{2\lambda }\phi X+2\xi }\phi X \\
&=&X\left( \frac{A}{2\lambda }\right) X+\frac{A}{2\lambda }\nabla _{X}X+\phi
X\left( \frac{B}{2\lambda }\right) X+\frac{B}{2\lambda }\nabla _{\phi X}X \\
&&-\phi X(\lambda )\xi -(1+\lambda )\nabla _{\phi X}\xi +\frac{B}{2\lambda }%
\nabla _{X}\phi X-\frac{A}{2\lambda }\nabla _{\phi X}\text{ }\phi X-2\nabla
_{\xi }\text{ }\phi X \\
&=&X\left( \frac{A}{2\lambda }\right) X+\frac{A}{2\lambda }\frac{B}{2\lambda 
}\phi X+\phi X\left( \frac{B}{2\lambda }\right) X \\
&&+\frac{B}{2\lambda }\left( -\frac{A}{2\lambda }\phi X+(\lambda -1)\xi
\right) \\
&&-\phi X(\lambda )\xi -(1+\lambda )(1-\lambda )X \\
&&+\frac{B}{2\lambda }\left( -\frac{B}{2\lambda }X+(1+\lambda )\xi \right) -%
\frac{A}{2\lambda }\left( \frac{A}{2\lambda }X\right) -2\left( \frac{\mu }{2}%
X\right)
\end{eqnarray*}%
\begin{eqnarray*}
&=&\left[ X\left( \frac{A}{2\lambda }\right) +\phi X\left( \frac{B}{2\lambda 
}\right) -\frac{B^{2}}{4\lambda ^{2}}-\frac{A^{2}}{4\lambda ^{2}}+(\lambda
^{2}-1)-\mu \right] X \\
&=&\left[ \frac{1}{2}\left( \frac{X(A)\lambda -A^{2}}{\lambda ^{2}}+\frac{%
\phi X(B)\lambda -B^{2}}{\lambda ^{2}}\right) -\frac{1}{4\lambda ^{2}}%
(A^{2}+B^{2})+(\lambda ^{2}-1)-\mu \right] X \\
&=&\left[ \frac{1}{2}\frac{X(A)+\phi X(B)}{\lambda }-\frac{1}{2\lambda ^{2}}%
(A^{2}+B^{2})-\frac{1}{4\lambda ^{2}}(A^{2}+B^{2})+(\lambda ^{2}-1)-\mu %
\right] X \\
&=&\left[ \frac{1}{2\lambda }\left( X(A)+\phi X(B)-\frac{1}{2\lambda }%
(A^{2}+B^{2})\right) -\frac{1}{2\lambda ^{2}}(A^{2}+B^{2})+(\lambda
^{2}-1)-\mu \right] X \\
&=&\left[ \frac{1}{2\lambda }(\bigtriangleup \lambda -\upsilon ^{2}\lambda )-%
\frac{1}{2\lambda ^{2}}\parallel grad\lambda \parallel ^{2}-(\kappa +\mu )%
\right] X
\end{eqnarray*}%
and thus%
\begin{equation*}
g(R(X,\phi X)\phi X,X)=\frac{1}{2\lambda }(\bigtriangleup \lambda -\upsilon
^{2}\lambda )-\frac{1}{2\lambda ^{2}}\parallel grad\lambda \parallel
^{2}-(\kappa +\mu )
\end{equation*}

By definition of scalar curvature, i.e. $\tau =TrQ=g(QX,X)+g(Q\phi X,\phi
X)+g(Q\xi ,\xi ),$ and using (\ref{Ricci} ), we have%
\begin{eqnarray*}
\tau &=&2g(R(X,\phi X)\phi X,X)+2g(Q\xi ,\xi ) \\
&=&\frac{1}{\lambda }(\bigtriangleup \lambda -\upsilon ^{2}\lambda )-\frac{1%
}{\lambda ^{2}}\parallel grad\lambda \parallel ^{2}-2(\kappa +\mu )+4\kappa
\\
&=&\frac{1}{\lambda }(\bigtriangleup \lambda -\upsilon ^{2}\lambda )-\frac{1%
}{\lambda ^{2}}\parallel grad\lambda \parallel ^{2}+2(\kappa -\mu ).
\end{eqnarray*}%
Thus the proof of \ the Theorem is completed.
\end{proof}

\begin{remark}
Let us suppose that $\mu =2$. By (\ref{Xmu}) and (\ref{FiXmu}), we have \ $%
X(\lambda )=\phi X(\lambda )=0.$ Using \ this relation \ in \ (\ref{eq6}),
we obtain \ $[X,\phi X]=2\xi .$ But this relation says that $[X,\phi
X](\lambda )=0=2\xi (\lambda )=2\upsilon \lambda .$ This is a contradiction
with $\lambda \neq 0$ and $\upsilon =const.\neq 0$. Because of this fact, we
did not consider this case.
\end{remark}

\end{document}